\newtheorem{theorem}{Theorem}[section]
\newtheorem{proposition}[theorem]{Proposition}
\newtheorem{corollary}[theorem]{Corollary}
\newtheorem{lemma}[theorem]{Lemma}
\theoremstyle{definition}
\newtheorem{definition}[theorem]{Definition}
\newtheorem{remark}[theorem]{Remark}
\newtheorem{notation}[theorem]{Notation}
\newtheorem{convention}[theorem]{Convention}
\theoremstyle{problem}
\newcommand{\Aut}{\mathrm{Aut}}
\newcommand{\CC}{\mathbf{C}}
\newcommand{\Min}{\mathrm{Min}}
\newcommand{\bd}{\partial}
\newcommand{\id}{\operatorname{id}}
\newcommand{\End}{\operatorname{End}}
\newcommand{\T}{\operatorname{\mathcal{T}}}
\newcommand{\G}{\operatorname{\mathbb{G}}}
\newcommand{\dist}{\operatorname{dist}}
\newcommand{\Sym}{\operatorname{Sym}}
\newcommand{\kk}{\mathrm{\bold{k}}}
\def\og{\leavevmode\raise.3ex\hbox{$\scriptscriptstyle\langle\!\langle$~}}
\def\fg{\leavevmode\raise.3ex\hbox{~$\!\scriptscriptstyle\,\rangle\!\rangle$}}
\title{Infinitely generated Hecke algebras with infinite presentation}
\author[1]{Corina Ciobotaru\thanks{corina.ciobotaru@unifr.ch}}
\date{March 16, 2016}
\begin{document}
\newcounter{qcounter}

\maketitle

\begin{abstract}
For a locally compact group $G$ and a compact subgroup $K$, the corresponding Hecke algebra consists of all continuous compactly supported complex functions on $G$ that are $K$--bi-invariant. There are many examples of totally disconnected locally compact groups whose Hecke algebras with respect to the maximal compact subgroups are not commutative. One of those is the universal group $U(F)^+$, when $F$ is primitive but not $2$--transitive. For this class of groups we prove that the Hecke algebra with respect to the maximal compact subgroup $K$ is infinitely generated and infinitely presented. This may be relevant for constructing irreducible unitary representations of $U(F)^+$ whose subspace of $K$--fixed vectors has dimension at least two, or answering the question whether $U(F)^+$ is a type I group or not. On the contrary, when $F$ is $2$--transitive that Hecke algebra of $U(F)^+$ is commutative, finitely generated admitting only one generator.

\end{abstract}

\section{Introduction}
\label{sec::introduction}

The Hecke algebras are very useful tools to study the representation theory of locally compact groups. For example, in the particular case of a semi-simple algebraic group $G$ over a non-Archimedean local field there are two important Hecke algebras that can be associated with: the Hecke algebra of $G$ with respect to the good maximal compact subgroup, called the spherical Hecke algebra of $G$, and the Hecke algebra of $G$ with respect to the Iwahori subgroup, which is a smaller compact subgroup. The latter algebra is called the Iwahori--Hecke algebra of $G$ and plays a very important role in the representation theory of algebraic groups, especially in the Kazhdan--Lusztig theory, being an intense and rich field of research. The former one is used to study the spherical unitary dual of semi-simple and analogous groups. That Hecke algebra is moreover commutative and finitely generated with respect to the convolution product. The representation theory of both algebras are intimately related to the representation theory of $G$. 

In this article we restrict our attention to Hecke algebras associated with specific totally disconnected locally compact groups and their maximal compact subgroups. First, let us recall the general definition of a Hecke algebra and some more specific motivation.
\begin{definition}
\label{def::hecke_algebra}
Let $G$ be a locally compact group and $K \leq G$ be a compact subgroup. We denote by $C_{c}(G, K)$ the space of continuous, compactly supported  complex-valued functions $\phi : G \to \CC$ that are  \textbf{$K$-bi-invariant}, i.e., functions that satisfy the equality $\phi(kgk{'})=\phi(g)$  for every  $ g \in G $ and  all $ k, k{'} \in K$.
We view the $\CC$-vector space $C_{c}(G, K)$ as an algebra whose multiplication is given by the convolution product
\[
 \phi \ast \psi \colon x \mapsto  \int_{G} \phi(xg )\psi(g^{-1})d\mu(g)
\]
where $\mu$ is the left Haar measure on $G$.
Moreover, $C_{c}(G,K)$ is called the \textbf{Hecke algebra} corresponding to $K \leq  G$ and $(G,K)$ forms  a \textbf{Gelfand pair} if the convolution algebra $C_{c}(G,K)$ is commutative.
\end{definition} 

For a locally compact group $G$ it is well known that there is a natural one-to-one correspondence between (irreducible) unitary representations of $G$ and (irreducible) non-degenerate $*$--representations of $L^{1}(G)$ (see for example Dixmier~\cite[Chapter~13]{Di77}). When restricted to a Hecke algebra of $G$ we have the following situation. To any (irreducible) unitary representation $(\pi, \mathcal{H})$ of $G$ admitting a non-zero $K$--invariant vector there is associated a canonical (irreducible) non-degenerate $*$--representation of the Hecke algebra $C_{c}(G,K)$ to $\End(\mathcal{H}^{K})$, where $\mathcal{H}^{K}$ is the space of $K$--invariant vectors with respect to $(\pi, \mathcal{H})$. In general the converse is not true, namely: the category of  non-degenerate $*$--representation of the Hecke algebra $C_{c}(G,K)$ is not necessarily equivalent to the category of unitary representations of $G$ generated by $K$--fixed vectors (see for example the PhD thesis of Hall~\cite{Ha99}). 

\medskip
The importance of Gelfand pairs in the theory of unitary representations of locally compact groups is given by the following well known two results. For the corresponding definitions one can consult van Dijk~\cite{vD09}.

\begin{proposition}[See Proposition~6.3.1 in~\cite{vD09}]
\label{prop::one_dim_K}
Let $G$ be a locally compact group and $K$ a compact subgroup of it. The pair $(G,K)$ is Gelfand if and only if for every irreducible unitary representation $\pi$ of G on a Hilbert space $\mathcal{H}$ the dimension of the subspace of $K$--fixed vectors is at most one.
\end{proposition}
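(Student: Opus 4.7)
The plan is to pass between $C_c(G,K)$ and the subspaces $\mathcal{H}^K$ via the integrated representation $\pi(\phi) = \int_G \phi(g) \pi(g)\, d\mu(g)$. A first observation, used in both directions: when $\phi$ is $K$-bi-invariant, the operator $\pi(\phi)$ annihilates $(\mathcal{H}^K)^\perp$ and preserves $\mathcal{H}^K$; in particular, $\pi$ restricts to a $*$-representation of $C_c(G,K)$ on $\mathcal{H}^K$. Throughout, let $P : \mathcal{H} \to \mathcal{H}^K$ denote the orthogonal projection, obtained by averaging $\pi(k)$ over $K$ with its normalised Haar measure.

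For the direction $(\Rightarrow)$, assume $C_c(G,K)$ is commutative and let $(\pi,\mathcal{H})$ be irreducible with $\mathcal{H}^K \neq 0$. The key technical step is to show that $\pi(C_c(G,K))$ acts irreducibly on $\mathcal{H}^K$. Given a closed invariant subspace $V \subseteq \mathcal{H}^K$, I would set $W = \overline{\pi(G) V}$; this is $G$-invariant, so irreducibility forces $W \in \{0, \mathcal{H}\}$. Using the averaging trick $f \mapsto \tilde{f}$ with $\tilde{f}(g) = \int_K \int_K f(kgk')\, dk\, dk'$, one verifies that for $v \in V$ and $f \in C_c(G)$ one has $P\pi(f)v = \pi(\tilde f)v \in V$, whence $W \cap \mathcal{H}^K = V$; thus $V \in \{0, \mathcal{H}^K\}$. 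Now $\pi(C_c(G,K))|_{\mathcal{H}^K}$ is a commutative $*$-subalgebra of $B(\mathcal{H}^K)$ acting irreducibly; by Schur's lemma for $*$-algebras its commutant reduces to $\CC \cdot \Id$, and since the algebra is commutative it is contained in its own commutant, so it consists of scalar operators. An irreducible action by scalars is only possible on a one-dimensional space, giving $\dim \mathcal{H}^K \leq 1$.

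For the direction $(\Leftarrow)$, assume every irreducible unitary representation of $G$ has at most one-dimensional $K$-fixed subspace, and fix $\phi, \psi \in C_c(G,K)$. For any irreducible $(\pi,\mathcal{H})$, both $\pi(\phi)$ and $\pi(\psi)$ vanish on $(\mathcal{H}^K)^\perp$ and land in $\mathcal{H}^K$; the latter being either $0$ or one-dimensional, these operators act by scalars on $\mathcal{H}^K$ and hence commute, so $\pi(\phi \ast \psi - \psi \ast \phi) = 0$. Since $C_c(G)$ embeds into the universal $C^*$-algebra $C^*(G)$, whose norm is separated by the irreducible $*$-representations (equivalently, by Gelfand--Raikov, by the irreducible unitary representations of $G$), it follows that $\phi \ast \psi = \psi \ast \phi$, whence $C_c(G,K)$ is commutative.

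The main obstacle is the irreducibility claim in the first direction: the calculation identifying $P\pi(f)|_{\mathcal{H}^K}$ with $\pi(\tilde f)|_{\mathcal{H}^K}$ is the only step that requires genuine manipulation of the Haar measures on $G$ and $K$, and without it the link between invariant subspaces of $\mathcal{H}^K$ under the Hecke algebra and $G$-invariant subspaces of $\mathcal{H}$ breaks down. Everything else is bookkeeping with Schur's lemma and the $C^*$-algebraic separation property of irreducible representations.
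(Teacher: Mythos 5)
Your argument is correct. Note, however, that the paper does not prove this statement at all: it is quoted verbatim from van Dijk (Proposition~6.3.1 in \cite{vD09}) and used as a black box, so there is no in-paper proof to compare against. What you have written is essentially the standard textbook proof of that cited result -- the forward direction via the projection $P=\int_K\pi(k)\,d\mu_K(k)$, the identity $P\pi(f)v=\pi(\tilde f)v$ showing that $\pi(C_c(G,K))$ acts topologically irreducibly on $\mathcal{H}^K$, and Schur's lemma for $*$-algebras; the converse via the fact that $\pi(\phi)=P\pi(\phi)P$ acts by a scalar on an at most one-dimensional $\mathcal{H}^K$, together with the separation of points of $C_c(G)$ by irreducible unitary representations -- and all the steps check out.
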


\begin{corollary}[See Corollary~6.3.3 in~\cite{vD09}]
\label{cor::sph_functions}
Let $(G,K)$ be a Gelfand pair. The positive-definite spherical functions on $G$ correspond one-to-one to the equivalence classes of irreducible unitary representations of G having a one-dimensional $K$-fixed vector space.
\end{corollary}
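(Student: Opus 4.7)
The plan is to construct mutually inverse maps between equivalence classes of irreducible unitary representations of $G$ possessing a (one-dimensional, by Proposition~\ref{prop::one_dim_K}) $K$-fixed subspace and positive-definite spherical functions on $G$.

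\emph{From representations to spherical functions.} Given an irreducible unitary $(\pi, \mathcal{H})$ with $\dim \mathcal{H}^{K} = 1$, choose a unit vector $v \in \mathcal{H}^{K}$ and define $\varphi_{\pi}(g) := \langle \pi(g) v, v \rangle$. Continuity, positive-definiteness, and the normalization $\varphi_{\pi}(e) = 1$ are immediate, while $K$-bi-invariance follows from $K$-fixedness of $v$. For the spherical property, the integrated operator $\pi(\phi) := \int_{G} \phi(g) \pi(g) \, d\mu(g)$ associated to $\phi \in C_{c}(G,K)$ preserves $\mathcal{H}^{K}$ by $K$-bi-invariance of $\phi$, and since $\mathcal{H}^{K} = \CC v$, it acts on $v$ by a scalar $\lambda(\phi)$. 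The assignment $\phi \mapsto \lambda(\phi)$ is a $\CC$-algebra homomorphism $C_{c}(G,K) \to \CC$ coinciding with integration of $\phi$ against $\varphi_{\pi}$, which is exactly the spherical property. Different unit choices in $\mathcal{H}^{K}$ differ by a phase that cancels in the diagonal matrix coefficient, and unitarily equivalent representations produce the same $\varphi_{\pi}$.

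\emph{From positive-definite spherical functions to representations.} Given such a $\varphi$, invoke the Gelfand--Naimark--Segal construction to produce a cyclic unitary representation $(\pi_{\varphi}, \mathcal{H}_{\varphi}, \xi_{\varphi})$ with $\langle \pi_{\varphi}(g) \xi_{\varphi}, \xi_{\varphi} \rangle = \varphi(g)$. The $K$-bi-invariance of $\varphi$ implies that $\xi_{\varphi}$ is $K$-fixed.

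\emph{Main obstacle: irreducibility of $\pi_{\varphi}$.} This is where the Gelfand pair hypothesis is indispensable. Sphericity of $\varphi$ translates, via the GNS inner product, into $\pi_{\varphi}(\phi) \xi_{\varphi} = \lambda(\phi) \xi_{\varphi}$ for every $\phi \in C_{c}(G,K)$, where $\lambda$ is the associated character of the Hecke algebra. Combined with the cyclicity of $\xi_{\varphi}$ and the commutativity of $C_{c}(G,K)$, this forces $\mathcal{H}_{\varphi}^{K} = \CC \xi_{\varphi}$. Now if $\mathcal{W} \leq \mathcal{H}_{\varphi}$ is a closed $G$-invariant subspace with orthogonal projection $P$, then $P\xi_{\varphi}$ is $K$-fixed, hence a scalar multiple of $\xi_{\varphi}$; by cyclicity, this multiple is $0$ or $1$, so $\mathcal{W} \in \{0, \mathcal{H}_{\varphi}\}$. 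Finally the two maps are mutually inverse: the GNS representation built from $\varphi_{\pi}$ is unitarily equivalent to $(\pi, \mathcal{H})$ with $v$ corresponding to $\xi_{\varphi_{\pi}}$ by the uniqueness clause of GNS (using that $v$ is cyclic, which follows from irreducibility of $\pi$), while the diagonal matrix coefficient of $\xi_{\varphi}$ in $\pi_{\varphi}$ recovers $\varphi$ by construction.
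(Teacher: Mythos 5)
The paper does not actually prove this statement: it is imported verbatim from van Dijk (Corollary~6.3.3 of \cite{vD09}) and used as a black box, so there is no internal proof to compare yours against. Your argument is the standard textbook proof and is essentially correct: diagonal matrix coefficients of a unit $K$-fixed vector in one direction, GNS in the other, with irreducibility extracted from $\dim\mathcal{H}_{\varphi}^{K}=1$ together with cyclicity. A few steps are elided and would need to be spelled out in a full write-up: (i) the $K$-fixedness of the GNS cyclic vector comes from $\varphi(k)=\varphi(e)=\|\xi_{\varphi}\|^{2}$ and the equality case of Cauchy--Schwarz, not directly from bi-invariance; (ii) the eigenvector identity $\pi_{\varphi}(\phi)\xi_{\varphi}=\lambda(\phi)\xi_{\varphi}$ is obtained by pairing against the dense family $\pi_{\varphi}(g)\xi_{\varphi}$ and invoking the functional equation $\int_{K}\varphi(xky)\,dk=\varphi(x)\varphi(y)$; (iii) the identity $\mathcal{H}_{\varphi}^{K}=\CC\xi_{\varphi}$ follows from cyclicity plus the eigenvector property alone, by $K$-averaging an approximation $\pi_{\varphi}(\phi_{n})\xi_{\varphi}\to w$ to replace $\phi_{n}$ by its bi-$K$-average in $C_{c}(G,K)$ --- the commutativity of $C_{c}(G,K)$ you invoke there is not actually what does the work, only the sphericity of $\varphi$ is used; and (iv) the scalar $c$ with $P\xi_{\varphi}=c\xi_{\varphi}$ lies in $\{0,1\}$ because $P$ is idempotent, and it is only afterwards that cyclicity converts $c=0$ or $c=1$ into $\mathcal{W}=0$ or $\mathcal{W}=\mathcal{H}_{\varphi}$. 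None of these is a genuine gap; they are routine verifications, and your overall structure matches the proof in van Dijk that the paper cites.
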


All known examples of connected and totally disconnected non-compact locally compact groups that admit Gelfand pairs are: 
\begin{enumerate}
\item 
all semi-simple non-compact real Lie groups, with finite center, together with their maximal compact subgroups
\item
\label{exam::gelfand_pairs_tot_disc}
locally compact groups $G$ that act continuously, properly, by type-preserving automorphisms and strongly transitively on a locally finite thick Euclidean building $\Delta$ together with the stabilizer in $G$ of a special vertex of $\Delta$.
\end{enumerate} 
In the latter case, by the main theorem of Caprace--Ciobotaru~\cite{CaCi} those are all Gelfand pairs that can appear. Recall that the second family of groups includes all semi-simple algebraic groups over non-Archimedean local fields and closed subgroups of $\Aut(T)$, that act $2$--transitively on the boundary of $T$, where $T$ is a bi-regular tree with valence at least $3$ at every vertex. Using the polar decomposition of those groups (see for example~\cite[Remark~4.6, Lemma~4.7]{Ci_c}) and the Lemma of Bernstein~\cite{bernstein74-type-I}, we have that the commutative Hecke algebra $C_{c}(G,K)$, where $K$ is the stabilizer in $G$ of a special vertex of its corresponding Bruhat--Tits building, is finitely generated. Moreover, the main result of Bernstein~\cite{bernstein74-type-I} states that all reductive $p$-adic Lie groups are groups of type I (see Definition~5.4.2 in~\cite{Di77}). For the case of closed non-compact subgroups of $\Aut(T_d)$ that act $2$--transitively on the boundary of $T_d$, the type I property is still open in this generality (see~\cite{demir04},~\cite{Ci_d}).

\medskip
Apart from the case of Gelfand pairs, the structure of Hecke algebras with respect to maximal compact subgroups is in general much less studied, even if those algebras can provide very useful information about the unitary representations of that locally compact groups. In this article we propose to study the structure of non-commutative Hecke algebras that are associated with a particular family of totally disconnected locally compact groups and their maximal compact subgroups. This family of groups is given by the universal groups $U(F)$ introduced by Burger--Mozes in \cite[Section~3]{BM00a}. In his PhD thesis \cite{Amann}, Amann studies these groups from the point of view of their unitary representations.

First, let us recall the basic definitions. 
\begin{definition}
\label{def::legal_color}
Denote by \textbf{$\mathbf{\T}$ the $d$-regular tree, with $d \geq 3$,} and by $\Aut(\T)$ its full group of automorphisms, endowed with the compact-open topology. Let $\iota: E(\T) \to \{1,...,d\} $ be a function from the set $E(\T)$ of unoriented edges of the tree $\T$ such that its restriction to the star  $E(x)$ of every vertex $x \in \T$ is in bijection with $\{1,...,d\} $. A function $\iota$ with those properties is called a \textbf{legal coloring} of the tree $\T$.
\end{definition}

\begin{definition}
\label{def::universal_group}
Let $F$ be a subgroup of permutations of the set $\{1,...,d\}$ and let $\iota$ be a legal coloring of $\T$. We define the \textbf{universal group}, with respect to $F$ and $\iota$, to be 
$$ U(F):= \{g \in \Aut(\T) \; \vert \; \iota \circ g \circ (\iota \vert_{E(x)})^{-1} \in F, \text{ for every x}\in \T \}. $$

By $U(F)^{+}$ we denote the subgroup generated by the edge-stabilizing elements of $U(F)$. Moreover,  Proposition~52 of \cite{Amann} tells us that the groups $U(F),U(F)^{+} $ are independent of the legal coloring $\iota$ of $\T$.
\end{definition}

From the definition we deduce that $U(F)$ and $U(F)^{+}$ are closed subgroups of $\Aut(\T)$. When $F$ is the full permutation group $\Sym(\{1,...,d\})$ we have that $U(F)= \Aut(\T)$. If $F=\id$ then $U(F)$ is the group of all automorphisms preserving the coloring $\iota$ of $\T$. 

Another key property which is used in the sequel is the following.
 
\begin{definition}[See \cite{Ti70}]
\label{def::Tits_prop}
Let $T$ be a locally finite tree and let $G \leq \Aut(T)$ be a closed subgroup. We say that $G$ has \textbf{Tits' independence property} if for every edge $e$ of $T$ we have the equality $G_e=G_{T_1}G_{T_2}$, where $T_i$ are the two infinite half sub-trees of $T$ emanating from the edge $e$ and $G_{T_i}$ is the pointwise stabilizer of the half-tree $T_i$.
\end{definition}

Following \cite{BM00a, Amann} we know that $U(F)$ and $U(F)^{+}$ have Tits' independence property.

To avoid heavy notation, for the rest of the article we refer to the following convention.

\begin{convention}
\label{not::general_notation}
Fix $d \geq 3$. We denote by $\dist_{\T}$ the usual distance on $\T$. Let $F$ be a primitive subgroup of $\Sym\{1, \cdots, d\}$. Consider fixed a coloring $\iota$ of $\T$, a vertex $x$ of $\T$ and an edge $e$ in the star of $x$. For simplicity set $\G:= U(F)^+$ and $K:=U(F)^{+}_{x}$, which is the stabilizer in $\G$ of the vertex $x$. Moreover, for a finite subset $A\subset \T$ we denote by $\G_{A}$ the pointwise stabilizer in $\G$ of the set $A$. For $\xi \in \partial \T$, $\G_{\xi}$ denotes the stabilizer in $\G$ of the ideal point $\xi$. Let $S(x, r):= \{y \in \T \; \vert \; \dist_{\T}(x,y)=r\}$, where $r \in \mathbb{N}^{*}$. Let $\T_{x,e}$ be the half-tree of $\T$ that emanates from the vertex $x$ and that contains the edge $e$. Set $V_{x,r}:= S(x, r) \cap \T_{x,e}$, for every $r \in \mathbb{N}^{*}$. We have that $\vert V_{x,r} \vert =(d-1)^{r-1}$. For every two points $y, z \in \T \cup \bd \T$, we denote by $[y,z]$ the unique geodesic between $y$ and $z$ in $\T \cup \bd \T$. For a hyperbolic element $\gamma$ in $\G$ let $\vert \gamma \vert:= \min_{x \in \T}\{ \dist_{\T}(x, \gamma(x))\}$, which is called the \textbf{translation length} of $\gamma$. Set $\Min(\gamma):=\{x \in \T \; \vert \;  \dist_{\T}(x, \gamma(x))=\vert \gamma \vert\}$.
\end{convention}

By~\cite{BM00a, Amann} the group $\G$ act $2$--transitively on the boundary $\partial \T$ if and only if $F$ is $2$--transitive. From this fact together with the main theorem in~\cite{CaCi}, applied to our case of $d$-regular trees, we obtain that $(\G, K)$ is a Gelfand pair if and only if $F$ is $2$--transitive. Therefore, when $F$ is primitive but not $2$--transitive the Hecke algebra $C_{c}(\G,K)$ is not commutative. In particular, by Proposition~\ref{prop::one_dim_K} the group $\G$ admits an irreducible unitary representation whose subspace of $K$--fixed vectors has dimension at least two. Moreover, the theory of unitary representations of $\G$ is not at all developed when $F$ is primitive but not $2$--transitive. It is therefore natural to study the structure of the Hecke algebra $C_{c}(\G,K)$ when $F$ is primitive but not $2$--transitive and its irreducible non-degenerate $*$--representations. Regarding the structure of this algebra we obtain the following theorem, which is the main result of this article.

\begin{theorem}
\label{thm::inf_gen_hecke_alg_int}
Let $F$ be primitive. If $F$ is $2$-transitive then the Hecke algebra $C_{c}(\G,K)$ is finitely generated admitting only one generator. If $F$ is not $2$--transitive, then the Hecke algebra $C_{c}(\G,K)$ is infinitely generated with an infinite presentation.
\end{theorem}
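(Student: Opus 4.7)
The plan is to treat the two regimes separately via combinatorics on $\T$. First identify $C_c(\G, K)$ as a $\CC$-vector space with $\CC[K \backslash (\G \cdot x)]$: since $\G$ acts on its orbit $\G \cdot x \subseteq V(\T)$ with $K = \G_x$, the double cosets $K \backslash \G / K$ are in bijection with $K$-orbits $[y]$ on $\G \cdot x$, giving a basis $\{\chi_{[y]}\}$. Convolutions can then be computed geometrically via Tits' independence: $\chi_{[y]} * \chi_{[z]}$ is a positive-coefficient combination of $\chi_{[w]}$'s counting ``compositions'' of $y$-type and $z$-type configurations in $\T$.

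For the $2$-transitive case, induction on $r$ using Tits' independence shows $K$ acts transitively on each sphere $S(x, r) \cap (\G \cdot x)$, giving a unique basis element $\chi_r$ at each admissible distance. A direct path count yields a three-term recurrence $\chi_a * \chi_r = A_r \chi_{r-a} + B_r \chi_r + C_r \chi_{r+a}$ with $C_r \neq 0$, where $a$ is the smallest positive admissible distance; solving inductively expresses each $\chi_r$ as a polynomial in $\chi_a$, so the algebra is generated by $\chi_a$ alone.

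For the primitive non-$2$-transitive case, I first identify $K$-orbits on $V_{x,r}$ with equivalence classes of reduced color-sequences $(c_0, \ldots, c_{r-1})$, where two sequences are equivalent iff each consecutive pair $(c_{i-1}, c_i)$ lies in the same diagonal $F$-orbit. Primitivity combined with failure of $2$-transitivity gives $m \geq 2$ distinct pair-orbit-types, so $K$-orbits on $V_{x,r}$ are parametrized by pair-orbit-type sequences of length $r-1$, of cardinality $m^{r-1}$. I then filter $C_c(\G, K)$ by support-distance, using $C_c(\G,K)_{\leq r} * C_c(\G,K)_{\leq s} \subseteq C_c(\G,K)_{\leq r+s}$, and pass to the associated graded algebra $\mathrm{gr}\,C_c(\G,K)$: by Tits' independence, its product is a ``junction-concatenation'' on pair-orbit-type sequences, $\chi_{(\vec{\tau})} \cdot \chi_{(\vec{\sigma})} = \sum_\tau d_\tau \chi_{(\vec{\tau}, \tau, \vec{\sigma})}$, with positive integer multiplicities $d_\tau$ depending only on the junction type $\tau$. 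An inductive linear-algebra argument, computing the rank of the matrix whose rows are such junction-summed products, shows that at each degree $r$ the span of products of lower-degree elements is a proper subspace of $\mathrm{gr}_r$ of codimension growing with $r$. This forces $\mathrm{gr}\,C_c(\G,K)$, and hence $C_c(\G,K)$, to be infinitely generated. A parallel graded argument tracking the accumulation of relations across scales yields the infinite presentation.

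The main obstacle is the rank-deficiency step: producing an explicit lower bound on the codimension of the span of products at each degree, which requires detailed combinatorial analysis of $F$-orbits on color tuples. Primitivity (as opposed to mere transitivity) of $F$ enters essentially here, since without it the pair-orbit-type structure may not be rich enough to yield a persistent codimension gap, and without failure of $2$-transitivity the gap vanishes, consistent with the first part of the theorem.
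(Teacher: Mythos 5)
Your outline follows essentially the same route as the paper: a basis of $C_{c}(\G,K)$ given by the characteristic functions of $K$--double cosets, the identification of the $\kk^{2r-1}$ cosets at translation length $2r$ with sequences of junction types (the paper's Lemma~\ref{lem::existence_indep_hyp_elements} on $\G_e$--orbits on $V_{x,2r}$), the fact that the top-degree part of a convolution product is a ``junction-concatenation'' producing exactly $\kk$ maximal double cosets, each with coefficient $\mu(K)=1$ (Propositions~\ref{prop::KKK} and~\ref{prop::max_coeff}), and the $2$--transitive case via transitivity of $K$ on spheres and a three-term recurrence in $\mathbbm{1}_{KaK}$. Your associated-graded packaging is a reasonable reformulation of the paper's notions of degree, maximal coefficient and weak linear independence, and would indeed suffice, since a finite generating set of bounded degree would force the top-degree parts of products to exhaust each graded piece beyond that bound.

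The decisive step, however, is missing, and you flag it yourself as ``the main obstacle'': nothing in the proposal actually establishes that the span of (top-degree parts of) products of lower-degree elements has positive, let alone growing, codimension in each graded piece. This is where all the content of the theorem lies, and it is what the paper spends Section~\ref{sec::combinaorial_formulas} and Proposition~\ref{prop::sub_base_deg_r} proving. Concretely, two things are needed: first, that the system of degree-$2r$ equations built from the previously chosen sub-base is \emph{weakly linearly independent} --- i.e.\ no linear combination of these products kills all top-degree terms --- which requires the careful bookkeeping of exactly which words $\mathbbm{1}_{K\eta_{1}^{i_1}\cdots\eta_{r}^{i_r}\eta_{r+1}K}$ occur in which products and how often (the ``maximal function'' argument in the proof of Proposition~\ref{prop::sub_base_deg_r}); and second, the identity
\[
\kk^{2r-1}-\sum_{(k_1,\dots,k_{r-1})\in \mathit{Sum}(r)}\binom{k_1+\cdots+k_{r-1}}{k_1,\dots,k_{r-1}}\,f(1,\kk)^{k_1}\cdots f(r-1,\kk)^{k_{r-1}}=\kk^{r}(\kk-1)^{r-1}\eqcomma
\]
proved via Newton's general binomial theorem (Lemma~\ref{lem::combinatorial_formula}), which pins the codimension at degree $2r$ down to $f(r,\kk)=\kk^{r}(\kk-1)^{r-1}>0$ for $\kk\geq 2$. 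Without the independence statement the products could a priori span everything in high degree, so your ``persistent codimension gap'' remains an assertion rather than a proof. A minor further point: primitivity of $F$ is not what drives the codimension count (only $\kk\geq 2$ enters there); the paper uses primitivity earlier, to produce hyperbolic elements translating $e$ to any edge at odd distance, hence the $KA^{+}K$ decomposition and the surjectivity of the word parametrization of double cosets on which everything else rests.
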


Understanding irreducible non-degenerate $*$--representations of $C_{c}(\G,K)$ when $F$ is primitive but not $2$--transitive is left for a further study. It would be also interesting to see if Theorem~\ref{thm::inf_gen_hecke_alg_int} could be used to answer the open question whether $\G$ is not a type I group when $F$ is primitive but not $2$--transitive and to construct an explicit irreducible unitary representation of $\G$ whose subspace of $K$--fixed vectors has dimension at least two.

The article is structured as follows. In Section~\ref{sec::combinaorial_formulas} we prove combinatorial formulas that are essential for the proof of our main theorem. In Section~\ref{sec::Hecke_alg} we study the structure of the Hecke algebra $C_{c}(\G,K)$ when $F$ is primitive. This is used in Section~\ref{sec::infinite_gen} to prove Theorem~\ref{thm::inf_gen_hecke_alg_int}.

\subsection*{Acknowledgement} We would like to thank Anders Karlsson for his encouragement to continue working on this article and his useful comments and Laurent Bartholdi, Ofir David, Iain Gordon, Alex Lubotzky and Agata Smoktunowicz for further discussions regarding finitely and infinitely generated algebras. This article was written when the author was a post-doc at the University of Geneva and University of M\"{u}nster. We would like to thank those institutions for their hospitality and good conditions of working.

\section{Some combinatorial formulas}
\label{sec::combinaorial_formulas}
 
This section is meant to prove combinatorial formulas relating multinomial and binomial coefficients, as shown by Proposition~\ref{prop::little_computation} below. These formulas are proved using Newton's General Binomial Theorem. As a consequence we obtain Lemma~\ref{lem::combinatorial_formula} below, which is one of the key ingredients involved in the main results of this article.

\begin{definition}
\label{def::f_function}
Let $f, f': \mathbb{N}^{*} \times \mathbb{N}^{*} \rightarrow \mathbb{N}^{*}$ be defined by $$f(r,k):= k^{r}(k-1)^{r-1} \text{ and } f'(r,k):=(k-1)^{r-1},$$for every $(r,k) \in \mathbb{N}^{*} \times \mathbb{N}^{*}$, where $\mathbb{N}^{*}:= \mathbb{N}\setminus \{0\}$ and with the convention that $0^{0}=1$.

For $n \in \mathbb{N}$, set $\mathit{Sum}(n):= \{ (k_1, \cdots, k_{n-1}) \; \vert \; k_i  \in \mathbb{N} \text{ and } n= \sum\limits_{i=1}^{n-1} k_i \cdot i\}$.  

For $l \geq 2$ and $(k_1, \cdots, k_{l}) \in \mathbb{N}^{l}$ the corresponding \textbf{multinomial coefficient} is defined to be $$\left( \begin{array}{c} k_1+ \cdots +  k_{l}\\ k_1, \cdots, k_{l} \end{array} \right):= \frac{(k_1+ \cdots +  k_{l})!}{k_1! k_{2}!\cdots k_{l}!},$$with the convention that $0!=1$. The multinomial coefficient counts the total number of different words of length  $k_1+ \cdots +  k_{l}$ formed with $l$ distinct letters, where $k_i$ is the multiplicity of the $i^{\text{th}}$ letter. For $l=2$ we obtain the binomial coefficient.

\medskip

For every $r \in \mathbb{N}$ with $r \geq 2$ let $$P_{1,r}(X):= X^{r-2}+\cdots+ X+1$$where $X$ is the variable of the polynomial. Notice that $P_{1,r}(1)= r-1$. Moreover $$P_{1,r}(X)-(r-1)= \sum\limits_{j=1}^{r-2} (X^{j}-1)=(X-1)  \left( \sum\limits_{j=1}^{r-2} (X^{j-1}+\cdots +X+1) \right).$$

For every $r \in \mathbb{N}$ with $r \geq 3$ define $$P_{2,r}(X):=\frac{P_{1,r}(X)-(r-1)}{X-1}= \sum\limits_{j=1}^{r-2} (X^{j-1}+\cdots +X+1). $$ 

More generally, for every $i \in \mathbb{N}$ with $i \geq 2$ and every $ r \in \mathbb{N}$ with $r \geq i+1$ define $$P_{i,r}(X):=\frac{P_{i-1,r}(X)-P_{i-1,r}(1)}{X-1}.$$
\end{definition}

\medskip
\begin{lemma}
\label{lem::P_i_r}
For every $r \in \mathbb{N}$ with $ r \geq 3$ and every $ i \in \{2, \cdots, r-1\} $ we have that
\begin{equation}
\label{equ_P_i_r}
P_{i,r}(X)= \sum\limits_{j=i}^{r-1} P_{i-1,j}(X).
\end{equation} 
\end{lemma}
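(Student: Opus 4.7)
The plan is to prove the identity by induction on $i$, with the base case $i=2$ obtained for free from the explicit description of $P_{2,r}(X)$ provided in Definition~\ref{def::f_function}, and the inductive step carried out by substituting the induction hypothesis into the defining recursion $P_{i,r}(X) = (P_{i-1,r}(X) - P_{i-1,r}(1))/(X-1)$ and using linearity.

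For the base case, Definition~\ref{def::f_function} gives $P_{2,r}(X) = \sum_{j=1}^{r-2}(X^{j-1}+\cdots+X+1) = \sum_{j=1}^{r-2} P_{1,j+1}(X)$, which after re-indexing is precisely $\sum_{j=2}^{r-1} P_{1,j}(X)$, as required. Before passing to the inductive step I would record the auxiliary fact that $P_{k,k+1}(X) = 1$ for every $k \geq 1$. This can be established by a short separate induction from the recursive definition, or more conceptually by checking (again by a one-line induction on $i$) the closed form
$$P_{i,r}(X) = \sum_{k=0}^{r-1-i} \binom{r-1}{k+i}(X-1)^{k},$$
which makes $P_{i,i+1}(X) = \binom{i}{i} = 1$ immediate. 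This auxiliary claim will be needed to kill a boundary term that appears in the induction.

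For the inductive step, assume \eqref{equ_P_i_r} holds at level $i-1$, namely $P_{i-1,s}(X) = \sum_{j=i-1}^{s-1} P_{i-2,j}(X)$ for every $s \geq i$. Applying this to $s = r$ and plugging into the defining recursion yields, by linearity of $(\cdot - \cdot(1))/(X-1)$,
$$P_{i,r}(X) = \sum_{j=i-1}^{r-1} \frac{P_{i-2,j}(X) - P_{i-2,j}(1)}{X-1}.$$
For each index $j \geq i$ the summand is exactly $P_{i-1,j}(X)$ by the definition of $P_{i-1,\cdot}$ (since $j \geq (i-1)+1$), so those terms reassemble into $\sum_{j=i}^{r-1} P_{i-1,j}(X)$. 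The only remaining term, corresponding to $j = i-1$, is $(P_{i-2,i-1}(X) - P_{i-2,i-1}(1))/(X-1)$, which vanishes because $P_{i-2,i-1}$ is a constant polynomial by the auxiliary claim. Collecting what remains gives $P_{i,r}(X) = \sum_{j=i}^{r-1} P_{i-1,j}(X)$.

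The main obstacle in this argument is the bookkeeping at the lower endpoint $j = i-1$ when one invokes the induction hypothesis: naively the sum starts one index too low compared with the claimed range $j=i,\ldots,r-1$. That discrepancy is exactly what the auxiliary observation $P_{i-2,i-1}(X) = 1$ resolves, by forcing the stray boundary term to vanish; without this step the induction would not close.
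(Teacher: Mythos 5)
Your proof is correct and follows essentially the same route as the paper's: induction on $i$, with the inductive step obtained by substituting the induction hypothesis into the defining recursion and observing that the stray boundary term at the lowest index vanishes because $P_{i-2,i-1}$ is a constant polynomial (the paper phrases this as $P_{i-1,i}$ having degree zero, so $P_{i-1,i}(X)=P_{i-1,i}(1)$). The only cosmetic difference is that you establish the slightly stronger auxiliary fact that these boundary polynomials equal $1$, where constancy alone is all that is needed.
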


\begin{proof}
Given $r \geq 3$, we prove the lemma by induction on $i$. First we verify it for $i=2$. Indeed, we have

\begin{equation*}
\begin{split}
P_{2,r}(X)&= \frac{P_{1,r}(X)-P_{1,r}(1)}{X-1}= \frac{X^{r-2}+ X^{r-3}+ \cdots + X+1- (r-1)}{X-1}= \sum\limits_{j=1}^{r-2} \frac{X^j -1}{X-1}\\
&=\sum\limits_{j=2}^{r-1} (X^{j-2}+ \cdots +1)= \sum\limits_{j=2}^{r-1}P_{1,j}(X).
\end{split}
\end{equation*}

Suppose that the formula (\ref{equ_P_i_r}) is true for $i$ and we want to prove it for $i+1$. We have
\begin{equation*}
\begin{split}
P_{i+1,r}(X)&= \frac{P_{i,r}(X)-P_{i,r}(1)}{X-1}= \sum\limits_{j=i}^{r-1} \frac{P_{i-1,j}(X)- P_{i-1,j}(1)}{X-1}= \sum\limits_{j=i+1}^{r-1} P_{i,j}(X),
\end{split}
\end{equation*}as for $j=i$ we have that $P_{i-1,i}(X)=P_{i-1,i}(1)$ because the degree of the polynomial $P_{i-1,i}(X)$ is zero.
\end{proof}

We also need the following easy lemma.

\begin{lemma}
\label{lem::P_i_r_binomial}
For every $j \in \mathbb{N}$ with $j \geq 2$ and every $ i \in \{1, \cdots, j-1\} $ we have that
\begin{equation}
\label{equ_P_i_r_binomial}
P_{i,j}(1)= \left( \begin{array}{c}j-1\\ i \end{array} \right)= \frac{(j-1)!}{i! (j-i-1)!}.
\end{equation} 

\begin{proof}
First notice that $P_{1,j}(1)=j-1=\frac{(j-1)!}{1! (j-2)!}=\left( \begin{array}{c}j-1\\ 1\end{array} \right)$. 

Given $j  \geq 2$, one can easily verify formula (\ref{equ_P_i_r_binomial}) by induction on $i$ and by using formula (\ref{equ_P_i_r}) for $X=1$ and the known equality 

$$\sum\limits_{l=1}^{n} l (l+1)\cdots (l+k)= \frac{n(n+1)\cdots (n+k+1)}{k+2}.$$
\end{proof}

\end{lemma}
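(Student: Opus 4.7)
The plan is to prove the identity by induction on $i$, with $j$ allowed to vary, using formula~(\ref{equ_P_i_r}) evaluated at $X=1$ together with the classical hockey-stick identity for binomial coefficients (which is precisely what the summation formula cited at the end of the statement amounts to).

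First I would dispose of the base case $i = 1$. By the very definition of $P_{1,j}$ we have $P_{1,j}(1) = (j-1)$ ones added together, i.e.\ $P_{1,j}(1) = j-1 = \binom{j-1}{1}$. This matches the right-hand side of (\ref{equ_P_i_r_binomial}) and uses no auxiliary input.

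For the inductive step, I would assume that for some $i \geq 2$ we already know $P_{i-1,l}(1) = \binom{l-1}{i-1}$ whenever $l \geq i$. Fix $j \geq i+1$. Substituting $X = 1$ into formula~(\ref{equ_P_i_r}) of Lemma~\ref{lem::P_i_r} gives
\begin{equation*}
P_{i,j}(1) \;=\; \sum_{l=i}^{j-1} P_{i-1,l}(1) \;=\; \sum_{l=i}^{j-1} \binom{l-1}{i-1} \;=\; \sum_{m=i-1}^{j-2} \binom{m}{i-1}.
\end{equation*}
The hockey-stick identity (equivalently, the summation formula stated at the end of the lemma, after dividing through by $(k+1)!$ with $k = i-2$) collapses this telescoping-type sum to $\binom{j-1}{i}$, which is exactly what we wanted.

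There is no real obstacle here: the content is essentially combinatorial bookkeeping. The only point that requires a bit of care is checking the ranges of summation, namely that the index $l$ runs from $i$ to $j-1$ (so that the induction hypothesis, which requires $l \geq i$, is legitimately applicable term by term) and that the shift $m = l-1$ puts us in the standard form of the hockey-stick identity. Once those index checks are made, the identity falls out immediately.
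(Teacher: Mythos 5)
Your proof is correct and follows the same route the paper sketches: induction on $i$, evaluating formula~(\ref{equ_P_i_r}) at $X=1$, and collapsing the resulting sum of binomial coefficients via the hockey-stick identity (which is what the paper's cited summation formula amounts to after dividing by $(k+1)!$). You have simply filled in the index bookkeeping that the paper leaves to the reader.
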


\begin{proposition}
\label{prop::little_computation}
For every $r \in \mathbb{N}$ with $r \geq 2$ and every $ i \in \{1, \cdots, r-1\} $ we have that 
\begin{equation}
\label{equ_P_i_r_multinomial}
P_{i,r}(1)= \left( \begin{array}{c}r-1\\ i \end{array} \right)= \sum\limits_{\substack{(k_1, \cdots, k_{r-1}) \in \mathit{Sum}(r) \\  k_1+ \cdots +  k_{r-1}=r-i}} \left( \begin{array}{c} r-i \\ k_1, \cdots, k_{r-1} \end{array} \right).
\end{equation}
\end{proposition}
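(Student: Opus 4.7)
By Lemma~\ref{lem::P_i_r_binomial} the first equality $P_{i,r}(1) = \binom{r-1}{i}$ is already established, so the plan is to verify only the second equality: that $\binom{r-1}{i}$ coincides with the displayed sum of multinomial coefficients. I would proceed by reading that sum combinatorially and then evaluating it via the generating-function approach suggested by the section introduction (Newton's General Binomial Theorem).

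The first step is the combinatorial reading. For a fixed tuple $(k_1, \ldots, k_{r-1})$, the multinomial $\binom{r-i}{k_1, \ldots, k_{r-1}}$ counts the words of length $r-i$ over the alphabet $\{1, \ldots, r-1\}$ in which the letter $j$ appears exactly $k_j$ times. The two constraints $\sum_{j} k_j = r-i$ and $\sum_{j} j \cdot k_j = r$ then say that each such word $(a_1, \ldots, a_{r-i})$ is precisely an ordered composition of $r$ into exactly $r - i$ positive parts, since the upper bound $a_t \leq r-1$ is automatic as soon as there are at least two parts (the other $r-i-1$ parts being $\geq 1$ forces $a_t \leq i+1$). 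Consequently, summing the multinomial over all admissible tuples identifies the right-hand side with the cardinality of the set $\mathcal{C}(r, r-i)$ of all ordered compositions of $r$ into $r-i$ positive parts.

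The second step is to compute $|\mathcal{C}(r, r-i)|$ using the generating function $\sum_{m \geq 1} x^m = x/(1-x)$ for a single positive part. Multiplying $r-i$ copies together gives
\[
\sum_{n \geq 0} |\mathcal{C}(n, r-i)| \, x^n = \left( \frac{x}{1-x} \right)^{r-i} = x^{r-i}(1-x)^{-(r-i)}.
\]
Newton's General Binomial Theorem then yields $(1-x)^{-m} = \sum_{k \geq 0} \binom{m+k-1}{k} x^k$, and extracting the coefficient of $x^r$ on both sides produces $|\mathcal{C}(r, r-i)| = \binom{(r-i)+i-1}{i} = \binom{r-1}{i}$, which is the desired identity.

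The main subtlety I expect, rather than an obstacle, is handling the boundary case $i = r-1$: the unique composition $(r)$ must still be counted by the sum, so one has to read the constraints defining $\mathit{Sum}(r)$ so as to accommodate the multiplicity profile concentrated at index $r$, and check that the combinatorial identification above remains valid at the endpoint. Once this bookkeeping is in order, the argument reduces to the elementary coefficient extraction just outlined and should cause no difficulty.
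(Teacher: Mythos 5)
Your argument is essentially the paper's: the paper also proves the identity by extracting a coefficient from a power of the geometric series in two ways, once via Newton's General Binomial Theorem (giving $\binom{r-1}{i}$ as the coefficient of $X^{i}$ in $(1+X+X^2+\cdots)^{r-i}$) and once via the multinomial expansion (giving the displayed sum after identifying the index set with the relevant slice of $\mathit{Sum}(r)$). Your version merely multiplies through by $x^{r-i}$ and phrases the multinomial side as counting compositions of $r$ into $r-i$ positive parts; the computation is the same, and it is sound for $i \leq r-2$ (indeed your own remark that each part is then at most $i+1 \leq r-1$ is exactly what makes the identification with $\mathit{Sum}(r)$ legitimate in that range).

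The one real problem sits precisely in the boundary case you flag and then wave off as bookkeeping. For $i = r-1$ the right-hand sum is genuinely empty: the only composition of $r$ into one part is $(r)$, and a part equal to $r$ cannot be recorded by a tuple $(k_1,\ldots,k_{r-1})$ indexed by $\{1,\ldots,r-1\}$. Concretely, for $r=3$, $i=2$ the constraints $k_1+2k_2=3$ and $k_1+k_2=1$ have no solution, so the right-hand side is $0$ while $P_{2,3}(1)=\binom{2}{2}=1$. No reading of the constraints defining $\mathit{Sum}(r)$ accommodates a multiplicity concentrated at index $r$, so the identity as stated is false at $i=r-1$ and your proof cannot be completed there. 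To be fair, the paper's own proof has the identical defect (its set identification silently discards the coordinate $k_{i+1}=k_{r}$ exactly when $i=r-1$), and the proposition is only ever invoked with $i\leq r-2$ in the proof of Lemma~\ref{lem::combinatorial_formula}, since the minimum of $k_1+\cdots+k_{r-1}$ over $\mathit{Sum}(r)$ is $2$. So your argument is as correct as the paper's where it is needed; the honest fix is to restrict the statement to $i\leq r-2$ rather than to hope the endpoint sorts itself out.
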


\begin{proof}
The key ingredient in order to prove the proposition is the following equality, known as Newton's
General Binomial Theorem. Let $n \in \mathbb{N}^{*}$. Formally we have

\begin{equation}
\begin{split}
\label{equ::important_binomial}
(1+X+X^2+\cdots )^n&=\left( \sum\limits_{k=0}^{\infty}X^{k}\right)^n= \left( \frac{1}{1-X}\right)^{n}= (1-X)^{-n}\\
&= \sum\limits_{k=0}^{\infty} (-1)^k \left( \begin{array}{c} -n \\ k \end{array} \right) X^{k}= \sum\limits_{k=0}^{\infty} \left( \begin{array}{c} n+k-1 \\ k \end{array} \right) X^{k},
\end{split}
\end{equation}where by definition $\left( \begin{array}{c} -n \\ k \end{array} \right) :=\frac{(-n)(-n-1)\cdots (-n-k+1)}{k!}$, thus $(-1)^k \left( \begin{array}{c} -n \\ k \end{array} \right)= \frac{n(n+1)\cdots (n+k-1)}{k!}.$

\medskip
Take $n:= r-i$. Then for $k=i$ the coefficient of $X^{i}$ provided by formula (\ref{equ::important_binomial}) is  $\left( \begin{array}{c} r-i+i-1 \\ i \end{array} \right)= \left( \begin{array}{c} r-1 \\ i \end{array} \right)$.

\medskip
Now compute the coefficient of $X^{i}$ with respect to the product $(1+X+X^2+\cdots )^{r-i}$. By the definition of the multinomial coefficient this equals  $ \sum\limits_{\substack{k_2+2\cdot k_3+\cdots +i \cdot k_{i+1}=i \\  k_1+ k_2\cdots +  k_{i+1}=r-i}} \left( \begin{array}{c} r-i \\ k_1, \cdots, k_{i+1} \end{array} \right)$. Notice the following equality of sets:
 
 \begin{equation*}
 \begin{split}
 \{(k_1, k_2,\cdots, k_{i+1}) \; \vert \; k_2+2\cdot k_3+\cdots+ i \cdot k_{i+1}=i \text{ and }  k_1+ k_2\cdots +  k_{i+1}=r-i\}\\
=\\
\{(k_1, k_2,\cdots, k_{i+1}) \; \vert \; (k_1,k_2 \cdots, k_{i+1}, 0, \cdots, 0) \in \mathit{Sum}(r) \text{ and }  k_1+ k_2\cdots +  k_{i+1}=r-i\} \\
=\\
\{(k_1, k_2,\cdots, k_{r-1}) \; \vert \; (k_1,k_2 \cdots, k_{r-1}) \in \mathit{Sum}(r) \text{ and }  k_1+ k_2\cdots +  k_{r-1}=r-i\},\\
\end{split}
\end{equation*}as in the last set, $k_{i+2}=\cdots= k_{r-1}$ are always zero. Therefore
 $$\sum\limits_{\substack{k_2+2k_3+\cdots i k_{i+1}=i \\  k_1+ k_2\cdots +  k_{i+1}=r-i}} \left( \begin{array}{c} r-i \\ k_1, \cdots, k_{i+1} \end{array} \right)= \sum\limits_{\substack{(k_1, \cdots, k_{r-1}) \in \mathit{Sum}(r) \\  k_1+ \cdots +  k_{r-1}=r-i}} \left( \begin{array}{c} r-i \\ k_1, \cdots, k_{r-1} \end{array} \right).$$

\medskip 
We conclude that indeed $\sum\limits_{\substack{(k_1, \cdots, k_{r-1}) \in \mathit{Sum}(r) \\  k_1+ \cdots +  k_{r-1}=r-i}} \left( \begin{array}{c} r-i \\ k_1, \cdots, k_{r-1} \end{array} \right)=  \left( \begin{array}{c} r-1 \\ i \end{array} \right)$.
\end{proof}

\begin{lemma}
\label{lem::combinatorial_formula}

Let $ r, k \in \mathbb{N}$ with $ r,k\geq 1$. We have that
\begin{equation*}
k^{2r-1}- \sum\limits_{(k_1, \cdots, k_{r-1}) \in \mathit{Sum}(r)}\left( \begin{array}{c} k_1+ \cdots +  k_{r-1}\\ k_1, \cdots, k_{r-1} \end{array} \right) f(1,k)^{k_1} f(2,k)^{k_2}\cdots f(r-1,k)^{k_{r-1}}=f(r,k)
\end{equation*}
\end{lemma}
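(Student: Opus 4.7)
The plan is to group the terms in the sum by the value $s := k_1 + \cdots + k_{r-1}$, then reduce to Proposition~\ref{prop::little_computation} together with the binomial theorem. First I would observe that for any tuple $(k_1, \ldots, k_{r-1}) \in \mathit{Sum}(r)$, setting $s := \sum_{i=1}^{r-1} k_i$, the defining relation $\sum_i i\,k_i = r$ of $\mathit{Sum}(r)$ gives $\sum_i (i-1)k_i = r-s$, so
\[
\prod_{i=1}^{r-1} f(i,k)^{k_i} \;=\; k^{\sum_i i\, k_i}\, (k-1)^{\sum_i (i-1) k_i} \;=\; k^{r}(k-1)^{r-s}.
\]
Hence the product depends on the tuple only through $s$. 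For $r \geq 2$ the possible values are $s \in \{2, 3, \ldots, r\}$: clearly $s \leq r$ because $i \geq 1$; the value $s = 1$ is excluded since it would force some $k_j = 1$ with $j\cdot 1 = r$, impossible as $j \leq r-1$; and $s = 0$ is excluded because $r \geq 1$.

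Next, setting $i := r - s$, the sum on the left-hand side of the identity becomes
\[
k^{r} \sum_{i=0}^{r-2} (k-1)^{i}\, N_{r-i}, \qquad N_{s} \;:=\; \sum_{\substack{(k_1, \ldots, k_{r-1}) \in \mathit{Sum}(r) \\ k_1 + \cdots + k_{r-1} = s}} \binom{s}{k_1, \ldots, k_{r-1}}.
\]
For $i \in \{1, \ldots, r-2\}$, Proposition~\ref{prop::little_computation} gives $N_{r-i} = \binom{r-1}{i}$ directly. For the boundary value $i = 0$ (i.e., $s = r$), the simultaneous constraints $\sum_j k_j = r$ and $\sum_j j k_j = r$ force $(k_1, k_2, \ldots, k_{r-1}) = (r, 0, \ldots, 0)$, so $N_r = \binom{r}{r, 0, \ldots, 0} = 1 = \binom{r-1}{0}$.

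Combining, the sum equals $k^r \sum_{i=0}^{r-2} \binom{r-1}{i} (k-1)^{i}$, and the binomial theorem yields
\[
\sum_{i=0}^{r-2} \binom{r-1}{i} (k-1)^{i} \;=\; k^{r-1} - \binom{r-1}{r-1}(k-1)^{r-1} \;=\; k^{r-1} - (k-1)^{r-1}.
\]
Therefore the sum equals $k^{r}\bigl[k^{r-1} - (k-1)^{r-1}\bigr] = k^{2r-1} - f(r,k)$, which gives the stated identity. The case $r = 1$ is trivial: $\mathit{Sum}(1) = \emptyset$, so the sum vanishes, and $k^{2\cdot 1 - 1} = k = f(1,k)$. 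The only real step is the regrouping by $s$; after that, everything is an immediate application of Proposition~\ref{prop::little_computation} and the binomial theorem, with the boundary value $i = 0$ handled by a direct count.
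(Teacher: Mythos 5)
Your proof is correct. You and the paper share the same first half: both exploit that $\prod_i f(i,k)^{k_i}=k^{r}(k-1)^{r-s}$ depends only on $s=\sum_i k_i$, regroup the sum accordingly, and invoke Proposition~\ref{prop::little_computation} to evaluate the resulting multinomial sums (your careful handling of the boundary case $s=r$, giving $N_r=1$, matches the explicit ``$-1$'' term in the paper's computation). Where you diverge is the finish. The paper keeps the coefficients in the form $P_{r-i,r}(1)$ and then runs a long telescoping computation, repeatedly using the recursion $P_{j,r}(X)-P_{j,r}(1)=(X-1)P_{j+1,r}(X)$ at $X=k$ and peeling off factors of $(k-1)$ until only $k^{r}(k-1)^{r-1}$ remains; this is what Lemmas~\ref{lem::P_i_r} and~\ref{lem::P_i_r_binomial} are set up for. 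You instead convert the coefficients immediately to $\binom{r-1}{i}$ and close the argument with the ordinary binomial theorem, $\sum_{i=0}^{r-1}\binom{r-1}{i}(k-1)^{i}=k^{r-1}$. Your route is shorter and more transparent, and it bypasses the recursive machinery of the $P_{i,r}$ entirely in this step (that machinery is still needed to prove Proposition~\ref{prop::little_computation} itself, which both arguments rely on); the paper's telescoping version buys nothing extra here beyond consistency with its polynomial formalism. Your separate verification of the trivial case $r=1$ is also appropriate, since the regrouping argument implicitly assumes $r\geq 2$.
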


\begin{proof}
By Proposition~\ref{prop::little_computation}, we obtain that
\begin{equation*}
\begin{split}
M&:=k^{2r-1}- \sum\limits_{(k_1, \cdots, k_{r-1}) \in \mathit{Sum}(r)}\left( \begin{array}{c} k_1+ \cdots +  k_{r-1}\\ k_1, \cdots, k_{r-1} \end{array} \right) f(1,k)^{k_1} f(2,k)^{k_2}\cdots f(r-1,k)^{k_{r-1}}\\
&=k^{2r-1}- \sum\limits_{(k_1, \cdots, k_{r-1}) \in \mathit{Sum}(r)}\left( \begin{array}{c} k_1+ \cdots +  k_{r-1}\\ k_1, \cdots, k_{r-1} \end{array} \right) k^{r}(k-1)^{k_2+2k_3+\cdots+ (r-2) k_{r-1}}\\
&=k^{r}\left(  k^{r-1}-  \sum\limits_{(k_1, \cdots, k_{r-1}) \in \mathit{Sum}(r)}\left( \begin{array}{c} k_1+ \cdots +  k_{r-1}\\ k_1, \cdots, k_{r-1} \end{array} \right) (k-1)^{k_2+2k_3+\cdots+ (r-2) k_{r-1}} \right) \\
&= k^{r}\left(  k^{r-1} -1- \sum\limits_{i=2}^{r-1} \ \ \sum\limits_{\substack{(k_1, \cdots, k_{r-1}) \in \mathit{Sum}(r) \\  k_1+ \cdots +  k_{r-1}=i}} \left( \begin{array}{c} i \\ k_1, \cdots, k_{r-1} \end{array} \right) (k-1)^{r-i} \right)\\
&=k^{r}\left(  k^{r-1} -1-  \sum\limits_{i=2}^{r-1} P_{r-i, r}(1) (k-1)^{r-i} \right) \\
\end{split}
\end{equation*}
\begin{equation*}
\begin{split}
&=k^{r} \left( (k-1)(k^{r-2}+\cdots+ k+1)-  \sum\limits_{i=2}^{r-1} P_{r-i, r}(1)  (k-1)^{r-i} \right)\\
&=k^{r}(k-1)\left(P_{1,r}(k)- P_{1,r}(1)-\sum\limits_{i=2}^{r-2} P_{r-i, r}(1) (k-1)^{r-i-1} \right)\\
&=k^{r}(k-1)^2\left(P_{2,r}(k)-P_{2,r}(1) -\sum\limits_{i=2}^{r-3} P_{r-i, r}(1) (k-1)^{r-i-2} \right)\\
&=\cdots\\
&=k^{r}(k-1)^{r-3} \left(P_{r-3,r}(k)-P_{r-3,r}(1) -\sum\limits_{i=2}^{r-1-(r-3)} P_{r-i, r}(1) (k-1)^{r-i-(r-3)} \right)\\
&=k^{r}(k-1)^{r-3} \left((k-1)P_{r-2,r}(k) -\sum\limits_{i=2}^{2} P_{r-i, r}(1) (k-1)^{r-i-(r-3)} \right)\\
&=k^{r}(k-1)^{r-3} \left((k-1)P_{r-2,r}(k) -P_{r-2, r}(1) (k-1)\right)\\
&=k^{r}(k-1)^{r-2} (k-1)P_{r-1,r}(k)=k^{r}(k-1)^{r-1}P_{r-1,r}(1)= k^{r}(k-1)^{r-1} \left( \begin{array}{c} r-1 \\ r-1 \end{array} \right)\\
&=k^{r}(k-1)^{r-1}= f(r,k).
\end{split}
\end{equation*}
\end{proof}

\section{Computation of the Hecke algebra}
\label{sec::Hecke_alg}

Every function of the Hecke algebra $C_{c}(\G,K)$ is a finite linear combination of functions of the form $\mathbbm{1}_{K g K}$ with $g \in \G$. To better understand the Hecke algebra $C_{c}(\G,K)$ one should be able to evaluate the convolution product of a finite number of functions $\mathbbm{1}_{K g K}$ with $g \in \G$. The structure of the group $\G$ allows us to characterize its $K$--double cosets and to perform computations of the desired convolution products.

\subsection{$K$--double cosets of $\G$}
\label{subsec::indep_hyp_elements}

The goal of this subsection is to describe and count $K$--double cosets of $\G$. Along the way we also give a description of some special hyperbolic elements of $\G$ that play an important role in that study. 

\medskip
When $F$ is primitive but not $2$--transitive, the group $\G$ still has some of the properties of closed non-compact subgroups of $\Aut(\T)$ that act $2$--transitively on the boundary $\bd \T$.

\begin{remark}
\label{rem::existence_hyp_elements}
As $F$ is primitive, given an edge $e'$ of $\T $ at odd distance from $e$, one can construct (using the definition of $\G$) a hyperbolic element in $\G$ translating $e$ to $e'$. Moreover, every hyperbolic element in $\G$ has even translation length. 
\end{remark}

\begin{lemma}($KA^{+}K$ decomposition)
\label{lem::KAK_decomposition}
Let $F$ be primitive. Then $\G$ admits a $KA^{+}K$ decomposition, where $$A^{+}:=\{\gamma \in \G \; \vert \; \gamma \text{ is hyperbolic, } e \subset \Min(\gamma), \gamma \text{ translates the edge } e \text{ inside } \T_{x,e} \} \cup \{\id\}.$$
\end{lemma}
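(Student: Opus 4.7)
My plan is to show that every $g \in \G$ lies in $K a K$ for some $a \in A^{+}$. If $g(x) = x$ then $g \in K$ and I can write $g = g \cdot \id \cdot \id \in K \cdot A^{+} \cdot K$, since $\id \in A^{+}$. So assume $g(x) \neq x$ and set $n := \dist_{\T}(x, g(x))$. Because $\G$ is generated by elements each of which pointwise fixes some edge of $\T$, every element of $\G$ preserves the canonical bipartition of $\T$, so $n$ is even.

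Next I would produce a reference element $a \in A^{+}$ of translation length $n$. Let $e'$ be the edge lying along the geodesic ray from $x$ whose first edge is $e$, whose $n$-th vertex is a point of $V_{x,n}$, and whose $(n{+}1)$-st vertex is a chosen neighbor of that point inside $\T_{x,e}$; then $e'$ is at odd distance $n-1$ from $e$ and lies entirely in $\T_{x,e}$. By Remark~\ref{rem::existence_hyp_elements}, which uses that $F$ is primitive, there is a hyperbolic element $a \in \G$ mapping $e$ to $e'$. Since the axis of a hyperbolic tree automorphism is characterized by containing any edge it translates along it, the axis of $a$ must contain both $e$ and $e'$; hence $e \subset \Min(a)$, $\vert a \vert = n$, and $a$ translates $e$ into $\T_{x,e}$, so $a \in A^{+}$ with $a(x) \in V_{x,n}$.

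The remaining and principal step is to prove that $K$ acts transitively on the sphere $S(x, n)$. I would argue by induction on $n$. The base case $n = 1$ asserts that the local action of $K$ on the star of $x$ is transitive, which is to be extracted from primitivity (hence transitivity) of $F$ together with the defining local structure of $U(F)^{+}$ at $x$. For the inductive step, given $y, y' \in S(x, n)$, I would first apply the induction hypothesis to bring the penultimate vertices of $[x,y]$ and $[x,y']$ into coincidence at some $z \in S(x, n-1)$; then, applying Tits' independence property at the edge joining $z$ to its own predecessor on the geodesic from $x$, together with the base case applied to the star of $z$ on the side away from $x$, one produces an element of $K$ that fixes $[x, z]$ pointwise and sends the image of $y$ to $y'$. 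Guaranteeing that all transitivity arguments on neighbors of a vertex can be realized inside $U(F)^{+}$, rather than merely inside $U(F)$, is the main technical obstacle, and is where primitivity (not just transitivity) of $F$ is crucial.

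Once transitivity of $K$ on $S(x, n)$ is granted, the conclusion is immediate: choose $k_{1} \in K$ with $k_{1}(g(x)) = a(x)$; then $a^{-1} k_{1} g$ fixes $x$ and hence lies in $K$, say $a^{-1} k_{1} g = k_{2}$. Rearranging gives $g = k_{1}^{-1} a k_{2}$ with $k_{1}^{-1}, k_{2} \in K$ and $a \in A^{+}$, which establishes the desired $K A^{+} K$ decomposition.
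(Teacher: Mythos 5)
The first two steps of your argument are fine, but the step you yourself single out as principal --- that $K$ acts transitively on the sphere $S(x,n)$ --- is false for $n \geq 2$ whenever $F$ is primitive but not $2$--transitive, which is exactly the case this paper is concerned with. The base case $n=1$ does hold (the local action of $K$ at $x$ is the transitive group $F$), but the inductive step breaks down: an element of $K$ fixing the geodesic $[x,z]$ pointwise acts on the $d-1$ neighbours of $z$ away from $x$ only through the point stabilizer $F_i$ of the colour $i$ of the last edge of $[x,z]$, and by Remark~\ref{rem::number_orbits} that action has $\kk$ orbits, not one. So $K$ already has $\kk$ orbits on $S(x,2)$, and more generally Lemma~\ref{lem::existence_indep_hyp_elements} records that $\G_e$ has $\kk^{2r-1}$ orbits on $V_{x,2r}$. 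Consequently your single reference element $a$ with $a(x) \in V_{x,n}$ cannot in general be matched to $k_1(g(x))$ by any $k_1 \in K$. The obstacle is not, as you suggest, realizing the transitivity inside $U(F)^{+}$ rather than $U(F)$; the required transitivity fails even in $U(F)$ unless $F$ is $2$--transitive.

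The lemma is nonetheless true because $A^{+}$ is much larger than a single cyclic semigroup, and the paper's proof exploits this: it does not fix one element of $A^{+}$ per translation length, but chooses the hyperbolic element adapted to the target vertex. Concretely, after using transitivity of $K$ on the star of $x$ to arrange $kg(x) \in \T_{x,e}$, one applies Remark~\ref{rem::existence_hyp_elements} to an edge $e'$ in the star of the vertex $kg(x)$ pointing away from $x$ (such an edge is at odd distance from $e$) to obtain $\gamma \in A^{+}$ with $\gamma(e)=e'$, hence $\gamma(x)=kg(x)$ and $\gamma^{-1}kg \in K$. Only transitivity on the sphere of radius one is ever needed. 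If you replace your sphere-transitivity claim by this target-dependent choice of $\gamma$, your argument becomes the paper's.
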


\begin{proof}
Let $g \in \G$. If $g(x)=x$, then $g \in K$. Suppose that $g(x) \neq x$. Consider the geodesic segment $[x, g(x)]$ in $\T$ and denote by $e_1$ the edge of the star of $x$ that belongs to $[x, g(x)]$. Notice that $[x, g(x)]$ has even length and that there exists $k \in K$ such that $k(e_1)=e$; therefore, $kg(x) \in \T_{x,e}$. By Remark~\ref{rem::existence_hyp_elements} there is a hyperbolic element  $\gamma \in \G$ of translation length equal to the length of $[x, g(x)]$ that translates the edge $e$ inside $\T_{x,e}$ and such that $\gamma(x)=kg(x)$; thus $\gamma^{-1}kg \in K$. Notice that the $KA^{+}K$ decomposition of an element $g \in \G$ is not unique.
\end{proof}

\begin{remark}
\label{rem::K_cosets}
By Lemma~\ref{lem::KAK_decomposition} one notices that every $K$--double coset of $\G$ is of the form $K\gamma K$ for some $\gamma \in A^{+}$. When $F$ is $2$--transitive we have that $A^{+}=<a>^{+}$, where $a$ is a hyperbolic element of $\G$, with $\vert a \vert=2$ and $x \in \Min(a)$. In this case we obtain the well-known polar decomposition of $\G$ and the $K$--double cosets of $\G$ are just $\{Ka^{n}K\}_{n \geq 0}$.

\end{remark}

In order to describe the hyperbolic elements in $A^{+}$ we need the following easy but important remarks.

\begin{remark}
\label{rem::prod_of_two_hyp}
Let $\gamma, \gamma' \in A^{+}$ be two hyperbolic elements. We claim that $\gamma \gamma'$ is again a hyperbolic element of $A^{+}$ and that $\vert \gamma \vert +\vert \gamma' \vert = \vert \gamma \gamma' \vert$. Indeed, the edge $e$ endowed with the orientation pointing towards the boundary $\bd \T_{x, e}$ is sent by any $\eta \in A^{+}$ to an edge of $\T_{x, e}$ with induced orientation that points towards the boundary $\bd \T_{x, e}$. This observation proves the claim. 
\end{remark}

\begin{remark}
\label{rem::number_orbits}
For $i \in  \{1,\cdots, d\}$ let $F_i$ be the  stabilizer in $F$ of $i$. As $F$ is transitive on the set $\{1,\cdots, d\}$ the number of orbits of $F_i$ acting on the set $\{1,\cdots, d\}\setminus \{i\}$ is independent of the choice of the color $i$. Therefore, we denote by $\kk$ the total number of all such orbits. In addition, independently of the choice of a color $i \in \{1,\cdots, d\}$ we are allowed to denote by $n_j$, where $j \in \{1,\cdots, \kk \}$, the number of elements of each $F_{i}$--orbit in $\{1,\cdots, d\}\setminus \{i\}$. Notice that $n_1+\cdots +n_{\kk}=d-1$.  If $F$ is $2$--transitive $\kk=1$. If $F$ is not $2$--transitive we have that $\kk \geq 2$. 
\end{remark} 

Let us now describe the elements in $A^{+}$ of translation length $2$. By Remarks~\ref{rem::existence_hyp_elements} and~\ref{rem::number_orbits} it is easy to see that there are exactly $\kk$ distinct $K$--double cosets $K\gamma K$, with $\gamma \in A^{+}$ and $\vert \gamma \vert =2$.  Let $K\gamma K$ be such a $K$--double coset and let $\gamma_1, \gamma_2 \in A^{+} \cap K \gamma K$. Then $\vert \gamma_i\vert =2$, for every $i \in \{1,2\}$. We notice that there are two cases: either there is $k \in \G_{e}$ such that $k\gamma_{1}(e)=\gamma_2(e)$, or $k\gamma_{1}(e) \neq  \gamma_2(e)$ for every $k \in K$. Moreover, the latter mentioned case occurs if and only if $F$ is not $2$--transitive.

\begin{definition}
\label{def::primitive_pairs}
Let $F$ be primitive and let $\gamma_1, \gamma_2 \in A^{+} $ be such that $\vert \gamma_1\vert =\vert \gamma_2 \vert=2$. We say that \textbf{$(\gamma_1,\gamma_2)$} is a \textbf{$K$--primitive pair} if there exists $k \in K$ such that $k \gamma_{1}(e)=\gamma_2(e)$. If this is the case, notice that $k \in \G_{e}$ and $K\gamma_1K=K\gamma_2K$. In particular, for every $k' \in \G_{[x,\gamma_{2}(e)]} k \G_{[x,\gamma_{1}(e)]}$ we have that $k' \gamma_{1}(e)=\gamma_2(e)$.

\end{definition}

\begin{lemma}
\label{lem::number_primitive_pairs}
Let $F$ be primitive. Then every $K$--double coset $K \gamma K$, with $\gamma \in A^+$ of translation length $2$, admits exactly $\kk$ $K$--primitive pairs up to $K$--left-right multiplication. In particular there are exactly $\kk^{2}$ $K$--primitive pairs of $A^{+}$, up to $K$--left-right multiplication.
\end{lemma}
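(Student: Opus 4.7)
The plan is to convert the counting into an orbit problem for $\G_e$ acting on a distinguished set of edges of $\T_{x,e}$, and then to feed in the combinatorial data of $F$.

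First I would recall that for any $\gamma_i\in A^{+}$ of translation length~$2$ the axis of $\gamma_i$ passes through $x,y,\gamma_i(x),\gamma_i(y)$, so that $\gamma_i(x)\in V_{x,2}$, $\gamma_i(y)\in V_{x,3}$, and $\gamma_i(e)$ is a ``forward'' edge of $\T_{x,e}$ with near endpoint in $V_{x,2}$. By the observation already recorded in Definition~\ref{def::primitive_pairs}, any $k\in K$ witnessing a $K$-primitive pair lies in $\G_e$; hence a $K$-primitive pair amounts to a pair of forward edges $(\gamma_1(e),\gamma_2(e))$ lying in a common $\G_e$-orbit, and the equivalence by $K$-left-right multiplication collapses such a pair to a single $\G_e$-orbit of forward edges.

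Next I would fix a double coset $D=K\gamma K$ with $\gamma\in A^{+}$ and $|\gamma|=2$; it corresponds to the $\G_e$-orbit $O\subset V_{x,2}$ containing $\gamma(x)$, so the task becomes counting $\G_e$-orbits on forward edges whose near endpoint lies in $O$. Fix $v\in O$ and set $c_v:=\iota(\{y,v\})|_v$. Applying Tits' independence at the edge $\{y,v\}$ together with the universal description $\G=U(F)^{+}$ identifies the local action of $\Stab_{\G_e}(v)$ at $v$ with the full subgroup $F_{c_v}\leq F$ acting on the $d-1$ edges at $v$ distinct from $\{y,v\}$; by Remark~\ref{rem::number_orbits} this action has exactly $\kk$ orbits. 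Since $\G_e$ is transitive on $O$, this lifts to $\kk$ $\G_e$-orbits on forward edges from $O$, hence $\kk$ classes of primitive pairs in $D$, giving $\kk\cdot\kk=\kk^{2}$ classes in total.

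The hardest part will be making the ``collapse'' in the first step precise, i.e.\ showing that two primitive pairs are $K$-left-right equivalent if and only if their associated pairs of forward edges span the same $\G_e$-orbit. The ``only if'' direction is essentially the description, already given in Definition~\ref{def::primitive_pairs}, of the set $\G_{[x,\gamma_2(e)]}\,k\,\G_{[x,\gamma_1(e)]}$ of all witnesses, which visibly lies inside $\G_e$. For the converse I would pick $h\in\G_e$ matching the two pairs of edges and then invoke Tits' independence at $e$ and at $\gamma_i(e)$ to absorb any remaining discrepancy between $h\gamma_i$ and a prescribed representative of each right $K$-coset into a common $K$-left multiplication.
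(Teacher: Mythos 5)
Your proof is correct and follows essentially the same route as the paper: the paper's own proof is the one-line statement that the lemma ``follows from Remark~\ref{rem::number_orbits}'', and your argument is precisely the fleshed-out version of that reduction, translating primitive-pair classes into $\G_e$--orbits of forward edges and counting those via the $\kk$ orbits of a point stabilizer $F_i$ on $\{1,\dots,d\}\setminus\{i\}$. The extra care you take with the ``collapse'' step (that the witness $k$ lies in $\G_e$ and that the equivalence is exactly coincidence of $\G_e$--orbits of $\gamma_i(e)$) is detail the paper leaves implicit, and it is sound.
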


\begin{proof}
The proof follows from Remark~\ref{rem::number_orbits}.
\end{proof}

For a further use and following Lemma~\ref{lem::number_primitive_pairs} let us make the following notation.
\begin{notation}
\label{not::classes_primitive}
For every $\gamma \in A^+$, with $\vert \gamma \vert=2$, we denote by $\{\gamma^{j}\}_{j \in \{1, \cdots, \kk \} }\subset A^+$ the set of different elements of translation length $2$ such that for every $\eta \in K\gamma K  \cap A^+$ there exists a unique $j \in \{1, \cdots, \kk \}$ with the property that $(\eta, \gamma^{j})$ is a $K$--primitive pair.
\end{notation}

\begin{lemma}
\label{lem::product_two_primitive}
Let $F$ be primitive. Let $\gamma,\gamma' \in A^{+}$ be such that there exists $k \in K$ with $k\gamma(e)=\gamma'(e)$ and let $(\gamma_1, \gamma_1')$ be a $K$--primitive pair. Respectively, let $\gamma_2, \gamma_2' \in A^{+}$ with $\vert \gamma_2 \vert =\vert \gamma_2' \vert$ and such that $K\gamma_2K=K\gamma_2'K$. Then there exists $k' \in K$ with the property that $k'\gamma\gamma_1(e)=\gamma' \gamma_1'(e)$, respectively, $k'\gamma\gamma_2(x)=\gamma' \gamma_2'(x)$.
\end{lemma}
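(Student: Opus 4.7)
My plan is to prove both statements by essentially the same two-step construction based on Tits' independence property applied to the edge $e$. Write $T^-$ and $T^+$ for the two half-subtrees of $\T$ sharing the edge $e$, labelled so that $x\in T^-$ and $\T_{x,e}\subseteq T^+$. I would begin by recording three simple geometric facts that drive the proof: (a) for any $\eta\in A^+\setminus\{\id\}$, the axis of $\eta$ passes through $e$ and is translated towards $T^+$, so $\eta(x)\in T^+$ and $\eta^{-1}(x)\in T^-$; (b) consequently $x\in \eta(T^-)$, hence any element pointwise fixing $\eta(T^-)$ lies in $K$; and (c) the geodesic $[x,\eta(x)]$ begins with the edge $e$.

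For the first step, I would use the hypothesis $k\gamma(e)=\gamma'(e)$ to modify $k$ into an element $k_1\in K$ whose behaviour matches $\gamma'\circ\gamma^{-1}$ on all of $T^+$. Concretely, $h:=\gamma'^{-1}k\gamma$ lies in $\G_e$ and by Tits' independence decomposes as $h=h_+h_-$ with $h_\pm\in \G_{T^\pm}$; set $k_1:=k\cdot(\gamma h_-^{-1}\gamma^{-1})$. Fact (b), applied to $\eta=\gamma$, yields $k_1\in K$, and a one-line computation gives $k_1\gamma=\gamma'h_+$, where $h_+$ pointwise fixes $T^+$. Since $\gamma_1(e)\subset T^+$ (because $\vert\gamma_1\vert=2$) and $\gamma_2(x)\in T^+$ (by fact (a), since $\vert\gamma_2\vert\geq 2$), this already delivers $k_1\gamma\gamma_1(e)=\gamma'\gamma_1(e)$ in the first assertion and $k_1\gamma\gamma_2(x)=\gamma'\gamma_2(x)$ in the second.

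For the second step, I would then push the inner datum through $\gamma'$ by conjugation. In the first case, starting from $\tilde{k}\in K\cap \G_e$ with $\tilde{k}\gamma_1(e)=\gamma_1'(e)$, the same Tits' decomposition lets me assume $\tilde{k}\in \G_{T^-}$ (the $T^+$-factor fixes $\gamma_1(e)$); then by fact (a) $\gamma'^{-1}(x)\in T^-$ is fixed by $\tilde{k}$, so $\gamma'\tilde{k}\gamma'^{-1}\in K$, and taking $k':=\gamma'\tilde{k}\gamma'^{-1}\cdot k_1$ finishes the case. In the second case the equality $K\gamma_2K=K\gamma_2'K$ provides $k_0\in K$ with $k_0\gamma_2(x)=\gamma_2'(x)$; the crucial observation is that both $[x,\gamma_2(x)]$ and $[x,\gamma_2'(x)]$ begin with $e$ by fact (c), which forces $k_0(e)=e$, and then the identical Tits' decomposition and conjugation by $\gamma'$ produce the desired $k':=\gamma'k_0\gamma'^{-1}\cdot k_1$.

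The main obstacle is not algebraic but geometric bookkeeping: at each step one must track which half-subtree contains each of $x$, $\gamma(x)$, $\gamma'^{-1}(x)$, $\gamma_i(e)$, $\gamma'(e)$, etc., in order to invoke the correct factor of the Tits' decomposition. The subtle step one could miss is verifying that $k_0$ in the second case is forced to lie in $\G_e$ before one can apply Tits' independence at all; this relies on the fact that elements of $A^+\setminus\{\id\}$ have translation length at least $2$, ensuring $\gamma_2(x)$ and $\gamma_2'(x)$ lie strictly beyond the edge $e$ in $T^+$.
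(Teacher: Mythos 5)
Your proof is correct and rests on the same two ingredients as the paper's own argument: Tits' independence at the edge $e$, used to localize the relevant elements of $\G_e$ to one half-tree, and conjugation by $\gamma'$ to transport the $K$--primitivity (respectively, double-coset) hypothesis from $e$ to $\gamma'(e)$. The difference is only presentational --- you build $k'$ explicitly from the factorization $\G_e=\G_{T^+}\G_{T^-}$, whereas the paper runs the same reduction as a proof by contradiction via orbits of segments --- so this is essentially the paper's approach, carried out somewhat more explicitly.
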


\begin{proof}
We have to find $k' \in K$ such that $k'$ sends the geodesic segment $[x, \gamma \gamma_1(e)]$ into the geodesic segment $[x, \gamma' \gamma_1'(e)]$. Notice that by hypothesis, it already exists $k \in K$ that sends the geodesic segment $[x, \gamma(e)] \subset [x, \gamma \gamma_1(e)]$ into the geodesic segment $[x, \gamma'(e)] \subset [x, \gamma' \gamma_1'(e)]$. We just need to find $k' \in \G_{[x, \gamma'(e)]}$ that sends $[\gamma'(e), k\gamma\gamma_1(e)]$ into $[\gamma'(e), \gamma' \gamma_1'(e)]$. Suppose the contrary that there is no such $k' \in \G_{[x, \gamma'(e)]}$. Using the definition of $\G$ (or the fact that $\G$ has Tits' independence property, see~\cite{Amann}), we have that by fixing the edge $\gamma'(e)$, the segment $[\gamma'(e), \gamma' \gamma_1'(e)]$ is not contained in the $\G_{\gamma'(e)}$--orbit of the segment $[\gamma'(e), k\gamma\gamma_1(e)]$. Apply now $(\gamma')^{-1}$. The latter assumption becomes that by fixing the edge $e$, the segment $[e, \gamma_1'(e)]$ is not contained in the $\G_{e}$--orbit of the segment $[e, (\gamma')^{-1} k\gamma \gamma_1(e)]$. But $(\gamma')^{-1} k\gamma \in \G_{e} < K$ and recall that $(\gamma_1, \gamma_1')$ is a $K$--primitive pair. We obtained thus a contradiction and the conclusion follows.

If we just consider that $K\gamma_2K=K\gamma_2' K$, one proceeds in the same way as above by replacing the edge $e$ with the vertex $x$ where is needed.

\end{proof}

The importance of $K$--primitive pairs is expressed by the next proposition where two $K$--double cosets of $\G$ are compared. Its proof relies on the fact that every element in $A^{+}$ can be written as a product of elements of $A^{+}$ that are of translation length $2$; the aim of describing the elements of $A^{+}$ being thus achieved.

\begin{proposition}
\label{prop::product_of_trans_lenght_two}
Let $F$ be primitive and let $\gamma \in A^{+}$. Then $\gamma= \gamma_1\gamma_2\cdots \gamma_t$, where $\gamma_i \in A^{+}$ is such that $\vert \gamma_i \vert =2$, for every $i \in \{1,\cdots, t\}$.  Moreover, for any $\gamma_1,\cdots, \gamma_t, \gamma'_1, \cdots, \gamma'_t \in A^{+}$, with $\vert \gamma_i \vert =\vert \gamma'_i \vert=2$ for every $i \in \{1,\cdots, t\}$, the following equivalence is true: $K\gamma_1\gamma_2\cdots \gamma_t K= K\gamma'_1\gamma'_2\cdots \gamma'_t K$ if and only if $(\gamma_i, \gamma'_i)$ are $K$--primitive for every $i \in \{1, \cdots, t-1\}$ and $K\gamma_tK= K \gamma'_t K$.
\end{proposition}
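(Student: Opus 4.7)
Our plan is to prove both assertions by induction on $t := |\gamma|/2$, using Remark~\ref{rem::existence_hyp_elements}, Remark~\ref{rem::prod_of_two_hyp}, and Lemma~\ref{lem::product_two_primitive} as the main tools. The subtle point in the first assertion is verifying that the chosen factor $\gamma_1^{-1}\gamma$ remains hyperbolic with $e$ on its axis.

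For the existence of the factorization, fix $\gamma \in A^+$ with $|\gamma| = 2t \geq 4$ and label the geodesic $[x, \gamma(x)]$ as $x = v_0, v_1, \ldots, v_{2t} = \gamma(x)$, with $v_1$ the far endpoint of $e$. We apply Remark~\ref{rem::existence_hyp_elements} to $e' := [v_2, v_3]$ (at tree distance $1$ from $e$) to obtain a hyperbolic $\gamma_1 \in \G$ with $\gamma_1(e) = e'$. Using the standard formula $d(y, g(y)) = |g| + 2\, d(y, \Min(g))$ for any isometry $g$ of a tree, together with the even-translation-length property from Remark~\ref{rem::existence_hyp_elements}, the value $d(x, \gamma_1(x)) \in \{d(x, v_2), d(x, v_3)\} = \{2, 3\}$ is forced to equal $2$, hence $\gamma_1(x) = v_2$, $|\gamma_1| = 2$, and $x$ lies on the axis of $\gamma_1$; consequently $e \subset \Min(\gamma_1)$ and $\gamma_1 \in A^+$. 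Set $\gamma' := \gamma_1^{-1}\gamma$. Since $\gamma_1^{-1}$ sends $v_2, v_3$ to $x, v_1$, we compute $d(x, \gamma'(x)) = d(v_2, v_{2t}) = 2t - 2 = d(v_3, v_{2t+1}) = d(v_1, \gamma'(v_1))$. A standard tree argument rules out a convex subtree being at the same positive distance from two adjacent vertices; hence $\gamma'$ cannot be elliptic, so it is hyperbolic with $e$ on its axis and $|\gamma'| = 2t - 2$. Thus $\gamma' \in A^+$, and by induction $\gamma'$ factors further, yielding the required decomposition of $\gamma$ (with Remark~\ref{rem::prod_of_two_hyp} confirming that the resulting product has translation length $2t$).

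For the characterization of the $K$-double coset, we also induct on $t$, with trivial base $t = 1$. In the inductive step, set $\eta := \gamma_1 \cdots \gamma_t$ and $\eta' := \gamma'_1 \cdots \gamma'_t$. For the $(\Leftarrow)$ direction, the inductive hypothesis gives $K \gamma_2 \cdots \gamma_t K = K \gamma'_2 \cdots \gamma'_t K$, and Lemma~\ref{lem::product_two_primitive} (in its second ``vertex'' form) applied with the $K$-primitive pair $(\gamma_1, \gamma'_1)$ and with $(\gamma_2 \cdots \gamma_t, \gamma'_2 \cdots \gamma'_t)$ in the role of $(\gamma_2, \gamma'_2)$ produces $k \in K$ with $k \eta(x) = \eta'(x)$, whence $K \eta K = K \eta' K$. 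For the $(\Rightarrow)$ direction, pick $k \in K$ with $k \eta(x) = \eta'(x)$; then $k$ maps the geodesic $[x, \eta(x)]$ isometrically onto $[x, \eta'(x)]$. Because $\gamma_1, \gamma_2 \cdots \gamma_t \in A^+$, the triangle inequality applied to $d(x, \gamma_1(x)) = 2$ and $d(\gamma_1(x), \eta(x)) = 2(t-1)$ is tight, so the geodesic $[x, \eta(x)]$ passes through $\gamma_1(x)$ and then through $\gamma_1(v_1)$; in particular its third edge is $\gamma_1(e)$, and analogously the third edge of $[x, \eta'(x)]$ is $\gamma'_1(e)$. Consequently $k \gamma_1(e) = \gamma'_1(e)$, showing that $(\gamma_1, \gamma'_1)$ is $K$-primitive. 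Setting $k_0 := (\gamma'_1)^{-1} k \gamma_1$, the identity $k(\gamma_1(x)) = \gamma'_1(x)$ yields $k_0 \in K$, and $k_0 (\gamma_2 \cdots \gamma_t)(x) = (\gamma'_2 \cdots \gamma'_t)(x)$ gives $K \gamma_2 \cdots \gamma_t K = K \gamma'_2 \cdots \gamma'_t K$; the inductive hypothesis then completes the argument.
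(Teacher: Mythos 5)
Your argument is correct and follows essentially the same route as the paper: peel off a translation-length-$2$ factor $\gamma_1$ with $\gamma_1(e)=[v_2,v_3]$ and induct for the factorization, and for the double-coset criterion read off the $K$-primitivity of $(\gamma_1,\gamma'_1)$ from the action of $k$ on the initial segments of $[x,\eta(x)]$ and $[x,\eta'(x)]$, then induct on the tail using Lemma~\ref{lem::product_two_primitive}. Your verification that $\gamma_1^{-1}\gamma$ stays in $A^{+}$ (via the displacement formula at the two endpoints of $e$) is a more explicit version of the paper's orientation remark, but it is not a different method.
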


\begin{proof}
Let us prove the first part of the proposition. Notice that if $\vert \gamma \vert =2$ the conclusion follows. Assume that $\vert \gamma \vert > 2$  and denote the vertices of the geodesic $[x, \gamma(x)] \subset \T_{x,e}$ by $x=x_0, x_1,x_2,\cdots, x_{2t} $. Then by Remark~\ref{rem::existence_hyp_elements} there exists $\gamma_1 \in A^{+}$ with $\vert \gamma_1 \vert =2$ such that $\gamma_1(e)= [x_2,x_3]$. Notice that $\gamma_1^{-1} \gamma $ is still an element of $A^{+}$, with $\vert \gamma_1^{-1} \gamma  \vert =\vert \gamma \vert-2$. Indeed, this is because the orientation of the edge $\gamma_1^{-1} \gamma(e)$ induced from the orientation of $e$ that points towards the boundary of the half-tree $\T_{x,e}$ is preserved. Apply now the above procedure to $\gamma_1^{-1} \gamma$. There exists thus $\gamma_2 \in A^{+}$ with $\vert \gamma_2\vert =2$ and such that $\gamma_2^{-1}\gamma_1^{-1} \gamma$ is in $A^{+}$ with $\vert \gamma_2^{-1}\gamma_1^{-1} \gamma \vert = \vert \gamma \vert-4$. By induction we obtain the elements $\gamma_1, \gamma_2, \cdots, \gamma_t$ with the property that $\gamma= \gamma_1\gamma_2\cdots \gamma_t$. The conclusion follows.

Let us prove the second part of the proposition.

Assume that $K\gamma_1\gamma_2\cdots \gamma_t K= K\gamma'_1\gamma'_2\cdots \gamma'_t K$. This means that there exist $k,k' \in K$ such that $k \gamma_1\gamma_2\cdots \gamma_t k'=\gamma'_1\gamma'_2\cdots \gamma'_t$; this is equivalent to saying that there is $k \in K$ such that the geodesic segment $[x, \gamma_1\gamma_2\cdots \gamma_t(x)]$ is sent by $k$ into the geodesic segment $[x, \gamma'_1\gamma'_2\cdots \gamma'_t(x)]$. From here we deduce that in fact $k\gamma_1(e)= \gamma'_1(e) \subset [x, \gamma'_1\gamma'_2\cdots \gamma'_t(x)]$. Therefore, $(\gamma'_1)^{-1}k\gamma_1=:k_1 \in \G_{e}< K$ and $k_1\gamma_2\cdots \gamma_t k'=\gamma'_2\cdots \gamma'_t$. We apply again the above procedure and by induction we obtain that $(\gamma_i, \gamma'_i)$ are $K$--primitive for every $i \in \{1, \cdots, t-1\}$ and $K\gamma_tK= K \gamma'_t K$. The implication follows.

Assume now that $(\gamma_i, \gamma'_i)$ are $K$--primitive for every $i \in \{1, \cdots, t-1\}$ and $K\gamma_tK= K \gamma'_t K$. We need to find $k \in K$ such that $k([x, \gamma_1\gamma_2\cdots \gamma_t(x)])=[x, \gamma'_1\gamma'_2\cdots \gamma'_t(x)]$. We prove this by induction on $i \in \{1, \cdots, t\}$ by applying Lemma~\ref{lem::product_two_primitive}. Notice that for $i=1$ we just use that $(\gamma_1, \gamma'_1)$ is a $K$--primitive pair. Suppose that the induction step is true for $i$ and we want to prove it for $i+1$. This means that there is $k_i \in K$ that sends the geodesic segment $[x, \gamma_1\gamma_2\cdots \gamma_i(e)]$ into the geodesic segment $[x, \gamma'_1\gamma'_2\cdots \gamma'_i(e)]$ and we want to find $k_{i+1} \in K$ that sends the geodesic segment $[x, \gamma_1\gamma_2\cdots \gamma_i \gamma_{i+1}(e)]$ into the geodesic segment $[x, \gamma'_1\gamma'_2\cdots \gamma'_i\gamma_{i+1}(e)]$. This follows by applying Lemma~\ref{lem::product_two_primitive}, even for the last induction step when $i+1=t$. The conclusion is thus proved.

\end{proof}

The following lemma counts the $K$--double cosets of $\G$. Together with Proposition~\ref{prop::product_of_trans_lenght_two} the goal of this subsection is achieved.

\begin{lemma}
\label{lem::existence_indep_hyp_elements}
Let $F$ be primitive. Using Convention~\ref{not::general_notation}, for every $r \geq 1$ the total number of orbits of the group $\G_e$ acting on $V_{x,2r}$ is exactly $\kk^{2r-1}$.

For any two distinct $\G_e$--orbits $[x_1], [x_2]$ of the $2r$--level $V_{x,2r}$, there exist hyperbolic elements $\gamma_1, \gamma_2 \in A^{+}$ such that $\gamma_i(x)=x_i$, for $i \in \{1,2\}$. In addition, for any such choice of $\gamma_1, \gamma_2$ having the above properties we have that $K \gamma_1 K \neq K \gamma_2 K$. 

Therefore, for every $r \geq 1$ there are exactly $\kk^{2r-1}$ disjoint $K$--double cosets $K\gamma K$, where $\gamma \in A^{+}$ and $\vert \gamma \vert =2r$. 
\end{lemma}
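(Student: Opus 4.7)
The plan is to prove the three claims in sequence and assemble them using the $KA^{+}K$ decomposition of Lemma~\ref{lem::KAK_decomposition}. First, to count the $\G_{e}$--orbits on $V_{x,2r}$, I would prove by induction on $n\geq 1$ the more general statement that $\G_{e}$ has exactly $\kk^{n-1}$ orbits on $S(x,n)\cap\T_{x,e}$. The base case $n=1$ is immediate since this set consists of a single vertex. For the inductive step, fix a level-$(n-1)$ vertex $v$, let $c$ be the color of the edge $[v^{-},v]$ where $v^{-}$ is the previous vertex on $[x,v]$, and count the orbits of $\Stab_{\G_{e}}(v)$ on the neighbors of $v$ at level $n$. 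By Tits' independence (Definition~\ref{def::Tits_prop}) applied at the edge $[v^{-},v]$, this stabilizer realizes the full local action $F_{c}$ on the $d-1$ outgoing edges at $v$; by Remark~\ref{rem::number_orbits}, $F_{c}$ has exactly $\kk$ orbits on these. Hence each level-$(n-1)$ orbit splits into $\kk$ level-$n$ orbits, yielding $\kk^{n-1}$ at level $n$; setting $n=2r$ gives $\kk^{2r-1}$.

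Next, given any $x_{1}\in V_{x,2r}$, I would produce $\gamma_{1}\in A^{+}$ with $\gamma_{1}(x)=x_{1}$ and $\vert\gamma_{1}\vert=2r$ as follows. Write $[x,x_{1}]$ as $x=y_{0},y_{1},\ldots,y_{2r}=x_{1}$, pick any neighbor $z$ of $x_{1}$ different from $y_{2r-1}$, and set $e_{1}:=[x_{1},z]\subset\T_{x,e}$. The edge $e_{1}$ lies at edge-distance $2r-1$ from $e$, which is odd, so by Remark~\ref{rem::existence_hyp_elements} there exists a hyperbolic $\gamma_{1}\in\G$ with $\gamma_{1}(e)=e_{1}$, where the orientation of $e$ pointing into $\T_{x,e}$ is sent to the outward orientation $x_{1}\to z$ of $e_{1}$. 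Then the axis of $\gamma_{1}$ is the bi-infinite geodesic through $x,y_{1},\ldots,x_{1},z$ and in particular contains $e$ together with $[x,x_{1}]$; so $e\subset\Min(\gamma_{1})$, $\gamma_{1}$ translates $e$ inside $\T_{x,e}$, $\gamma_{1}(x)=x_{1}$, and $\vert\gamma_{1}\vert=\dist_{\T}(x,x_{1})=2r$, hence $\gamma_{1}\in A^{+}$.

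Finally, to distinguish $K$--double cosets, suppose $\gamma_{1},\gamma_{2}\in A^{+}$ with $\vert\gamma_{i}\vert=2r$ satisfy $K\gamma_{1}K=K\gamma_{2}K$. Then there is $k\in K$ with $k\gamma_{1}(x)=\gamma_{2}(x)$, and as $k$ fixes $x$ it sends $[x,\gamma_{1}(x)]$ onto $[x,\gamma_{2}(x)]$. Since $\gamma_{i}\in A^{+}$ means the axis of $\gamma_{i}$ passes through $e$ into $\T_{x,e}$, each geodesic $[x,\gamma_{i}(x)]$ begins with the edge $e$; hence $k(e)=e$, i.e., $k\in\G_{e}$, and $\gamma_{1}(x),\gamma_{2}(x)$ lie in the same $\G_{e}$--orbit. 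Combined with Lemma~\ref{lem::KAK_decomposition} (so that every $K$--double coset is of the form $K\gamma K$ with $\gamma\in A^{+}$) and the second paragraph (so that every $\G_{e}$--orbit on $V_{x,2r}$ is realized by some such $\gamma$), this gives a bijection between $\G_{e}$--orbits on $V_{x,2r}$ and $K$--double cosets $K\gamma K$ with $\vert\gamma\vert=2r$, and the count $\kk^{2r-1}$ follows from the first paragraph. I expect the subtlest step to be in paragraph two, where one must verify that the element supplied by Remark~\ref{rem::existence_hyp_elements} has axis coinciding with the chosen bi-infinite line and translation length exactly $2r$ (rather than a smaller even divisor); this is handled by the orientation discussion together with $\gamma_{1}(x)=x_{1}$ and $x\in\Min(\gamma_{1})$ forcing $\vert\gamma_{1}\vert=\dist_{\T}(x,x_{1})=2r$.
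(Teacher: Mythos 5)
Your proposal is correct and follows essentially the same route as the paper: Tits' independence plus the local orbit count of Remark~\ref{rem::number_orbits} for the induction (you step one level at a time where the paper steps two, an immaterial difference), Remark~\ref{rem::existence_hyp_elements} for the existence of the hyperbolic representatives, and the observation that a $K$--element matching up $[x,\gamma_1(x)]$ and $[x,\gamma_2(x)]$ must lie in $\G_e$ for the distinctness of the double cosets.
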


\begin{proof}
We prove the first assertion by induction on $r$. For $r=1$, this is Remark~\ref{rem::number_orbits}. 

Suppose now that the induction hypothesis is true for $r \geq1$ and we want to prove it for $r+1$. Fix any vertex $y$ of the $2r$--level $V_{x,2r}$ and denote by $[z,y]$ the corresponding last edge of the geodesic segment $[x,y]$. We claim that there are exactly $\kk^{2}$ orbits of $\G_{[z, y]}$ acting on $V_{z, 3}$.  Indeed, this follows from the fact that $(d-1)^2=(n_1+\cdots +n_{\kk})^2=\sum_{i,j=1}^{\kk}n_i n_j$, where we have $\kk^{2}$ terms as the pairs $(i,j)$ and $(j,i)$ give disjoint orbits of the group $\G_{[z, y]}$ acting on $V_{z, 3}$. Applying then Tits' independence property of $\G$ for the subgroup $\G_{[z, y]}$, we obtain that in fact there are $\kk^{2}$ orbits of $\G_{[x, y]}$ acting on $V_{z, 3}$.  From the above claim the induction step follows easily.

The first part of the second assertion of the lemma is Remark~\ref{rem::existence_hyp_elements}.  Notice that for a given $\G_e$--orbit $[x_1]$, the hyperbolic element $\gamma_1$ does not depend, up to $\G_e$--conjugation, on the chosen representative $x_1$.

It remains to prove that for any two such hyperbolic elements $\gamma_1, \gamma_2$ we have that $K \gamma_1 K \neq K \gamma_2 K$. Indeed, consider two disjoint $\G_e$--orbits $[x_1], [x_2]$ of the $2r$--level $V_{x,2r}$ and two hyperbolic elements $\gamma_1, \gamma_2 \in \G$ such that $\gamma_i(x)=x_i$, for $i \in \{1,2\}$. By contraposition, suppose that $(K\gamma_1 K) \cap (K\gamma_2K) \neq \emptyset$.  Then $\gamma_1= k_1\gamma_2 k_2$, for some $k_1,k_2 \in K$. As $\gamma_i(x)=x_i$ and $x_1, x_2 \in V_{x,2r}$, we have that $x_1=k_1(x_2)$ and $k_1 \in \G_e$. This is a contradiction with $[x_1] \neq [x_2]$ as $\G_e$--orbits. The conclusion follows.
\end{proof}

\subsection{Convolution products of $K$--double cosets}
\label{subset::product_two_double_classes}

Recall that our main goal is to prove that the Hecke algebra $C_{c}(\G, K)$ is infinitely generated when $F$ is primitive but not $2$--transitive. Therefore, we need to understand the elements $f \in C_{c}(\G,K)$ and convolution products of those. As the support of any function $f \in C_{c}(\G,K)$ is compact and $K$--bi-invariant, using Lemma~\ref{lem::KAK_decomposition} the support of $f$ is covered with a finite number of compact-open subsets of the form $K \eta K$, with $\eta \in A^+$. We obtain that 
\begin{equation}
\label{equ::gen_decomposition}
f= \sum\limits_{i \in I} a_i \mathbbm{1}_{K\eta_iK},
\end{equation} where $I$ is a finite set, $a_i \in \mathbb{C}$ and $\eta_i \in A^{+}$ for every $i \in I$. This implies that the set of all functions $\mathbbm{1}_{K\gamma K}$, with $\gamma \in A^+$, forms an infinite (countable) base for the $\mathbb{C}$--vector space $C_{c}(\G,K)$ endowed with the addition of functions. 

To study convolution products of elements in $C_{c}(\G,K)$ it is enough to compute convolution products of the form $\mathbbm{1}_{K\gamma_1K} \ast \mathbbm{1}_{K\gamma_2K} \ast \cdots \ast \mathbbm{1}_{K\gamma_nK}$, where $\gamma_i \in A^+$ for every $i \in \{1, \cdots, n\}$. Firstly, given two elements $\gamma_1, \gamma_2 \in A^+$ not necessarily of the same translation length, we want to evaluate the $K$--double cosets appearing in the decomposition~(\ref{equ::gen_decomposition}) of the convolution product $\mathbbm{1}_{K\gamma_1K} \ast \mathbbm{1}_{K\gamma_2K}$.

Recall that by definition $$ \mathbbm{1}_{K\gamma_1K} \ast \mathbbm{1}_{K\gamma_2K}(x):= \int_{G}  \mathbbm{1}_{K\gamma_1K}(xg )\mathbbm{1}_{K\gamma_2K}(g^{-1})d\mu(g).$$ In order to determine the support of $ \mathbbm{1}_{K\gamma_1K} \ast \mathbbm{1}_{K\gamma_2K}$, we need to have that $xg \in K\gamma_1K$  and $g^{-1} \in K\gamma_2K$. This gives that $x \in  K \gamma_1K\gamma_2 K$, implying that the support of the function $ \mathbbm{1}_{K\gamma_1K} \ast \mathbbm{1}_{K\gamma_2K}$ is contained in $K \gamma_1K\gamma_2 K$. It is easy to check that in fact the support of $ \mathbbm{1}_{K\gamma_1K} \ast \mathbbm{1}_{K\gamma_2K}$ equals $K \gamma_1K\gamma_2 K$. 


\begin{definition}
\label{def::maximal_K_double_cosets}
Let $X$ be a compact $K$--bi-invariant subset of $\G$. Notice that $X$ is covered with a finite number of (open) $K$--double cosets $K \eta K$, where $\eta \in A^+$. We say that $K \eta K$ is a \textbf{maximal $K$--double coset} of $X$, where $\eta \in A^+$, if the translation length $\vert \eta \vert$ is maximal among all $K$--double cosets that appear in the above decomposition of $X$. 

Let $\gamma_1, \gamma_2 \in A^{+}$. Let $\mathbbm{1}_{K\gamma_1K} \ast \mathbbm{1}_{K\gamma_2K}= \sum\limits_{i \in I} a_i \mathbbm{1}_{K\eta_iK}$ as in equality~(\ref{equ::gen_decomposition}). We say that $a_i$ is a \textbf{maximal coefficient} of $\mathbbm{1}_{K\gamma_1K} \ast \mathbbm{1}_{K\gamma_2K}$ if the corresponding $K$--double coset $K \eta_iK$ of the support of  $\mathbbm{1}_{K\gamma_1K} \ast \mathbbm{1}_{K\gamma_2K}$ is maximal.
\end{definition}

\begin{proposition}
\label{prop::KKK}
Let $F$ be primitive and let $\gamma_1, \gamma_2 \in A^{+}$. By Proposition~\ref{prop::product_of_trans_lenght_two}, let $\gamma_1= \gamma'_1\gamma'_2\cdots \gamma'_{t_1}$ and $\gamma_2= \gamma''_1\gamma''_2\cdots \gamma''_{t_2}$,  where $\gamma'_i , \gamma''_j \in A^{+}$ are such that $\vert \gamma'_i \vert=\vert \gamma''_j \vert =2$, for every $i \in \{1,\cdots, t_1\}$ and every $j \in \{1,\cdots, t_2\}$.

Then the maximal $K$--double cosets that appear in $K\gamma_1 K \gamma_2 K$ are of the form $$K\gamma'_1\gamma'_2\cdots \gamma'_{t_1-1} \ \gamma \ \gamma''_1\gamma''_2\cdots \gamma''_{t_2}K$$ where $\gamma$ can be any element of  $A^{+} \cap K\gamma'_{t_1} K$.  In particular, we obtain exactly $\kk$ such maximal $K$--double cosets. 
\end{proposition}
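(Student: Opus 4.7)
The plan is to prove this in three steps: bound the translation length, characterize a maximal $\eta$ via a geodesic decomposition, and count primitive classes. For any $\eta \in A^+$ with $K\eta K \subset K\gamma_1 K\gamma_2 K$, writing $\eta = k_1 \gamma_1 k \gamma_2 k_2$ gives $|\eta| = \dist_{\T}(x, \gamma_1 k \gamma_2(x)) \leq 2(t_1+t_2)$, with equality iff the geodesic $[x, \gamma_1 k \gamma_2(x)]$ passes through $\gamma_1(x)$ without backtracking; the bound is attained by $\gamma_1\gamma_2 \in A^+$ (Remark~\ref{rem::prod_of_two_hyp}).

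Fix a maximal $\eta$, and let $y_e$ denote the endpoint of $e$ other than $x$. As in the proof of Proposition~\ref{prop::product_of_trans_lenght_two}, choose a decomposition $\eta = \eta_1 \cdots \eta_{t_1+t_2}$ with each $\eta_i \in A^+$, $|\eta_i|=2$, and with each edge $\eta_1 \cdots \eta_i(e)$ lying on $[x, \eta(y_e)]$; the given decompositions of $\gamma_1$ and $\gamma_2$ enjoy the analogous property. The max-length hypothesis factors $[x, \eta(x)]$ as $k_1([x, \gamma_1(x)]) \cup k_1 \gamma_1 k([x, \gamma_2(x)])$, so matching vertex-by-vertex against the $\gamma'_i$ and $\gamma''_j$ steps yields $\eta_1 \cdots \eta_i = k_1 \gamma'_1 \cdots \gamma'_i \cdot a_i$ with $a_i \in \G_e$ for $0 \leq i \leq t_1-1$ and $\eta_1 \cdots \eta_{t_1+j} = k_1 \gamma_1 k \cdot \gamma''_1 \cdots \gamma''_j \cdot b_j$ with $b_j \in \G_e$ for $0 \leq j \leq t_2-1$; the correction factors land in $\G_e$ because both sides agree on the pair $\{x, y_e\}$. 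Taking successive quotients gives $\eta_i = a_{i-1}^{-1} \gamma'_i a_i$ for $i<t_1$, $\eta_{t_1+j} = b_{j-1}^{-1} \gamma''_j b_j$ for $1 \leq j<t_2$, and $\eta_{t_1} = a_{t_1-1}^{-1} \gamma'_{t_1} k b_0$; hence $(\eta_i, \gamma'_i)$ and $(\eta_{t_1+j}, \gamma''_j)$ are $K$--primitive pairs (with witnesses $a_{i-1}, b_{j-1} \in \G_e \leq K$), $K\eta_{t_1+t_2}K = K\gamma''_{t_2}K$, and $\gamma := \eta_{t_1} \in A^+ \cap K\gamma'_{t_1}K$. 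Applying Proposition~\ref{prop::product_of_trans_lenght_two} to the decompositions $\eta_1 \cdots \eta_{t_1+t_2}$ and $\gamma'_1 \cdots \gamma'_{t_1-1}\gamma\gamma''_1 \cdots \gamma''_{t_2}$ gives $K\eta K = K\gamma'_1 \cdots \gamma'_{t_1-1}\gamma\gamma''_1 \cdots \gamma''_{t_2}K$.

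For the count, Proposition~\ref{prop::product_of_trans_lenght_two} combined with Lemma~\ref{lem::number_primitive_pairs} yields at most $\kk$ distinct maximal cosets. To realize all $\kk$, note $\eta_{t_1}(e) = a_{t_1-1}^{-1}[\gamma'_{t_1}(x), \gamma'_{t_1}(k(y_e))]$, so the primitive class of $\eta_{t_1}$ is the $K$--orbit of this edge. A short injectivity argument on $\gamma_1 = \gamma'_1 \cdots \gamma'_{t_1}$ identifies the backward axis directions of $\gamma_1$ and $\gamma'_{t_1}$ at $x$ (both are characterized by being sent to $\gamma'_1\cdots\gamma'_{t_1-1}(y_e)$ by $\gamma_1$), so as $k(y_e)$ ranges over $S(x,1)$ minus this backward direction, the image $\gamma'_{t_1}(k(y_e))$ sweeps $S(\gamma'_{t_1}(x), 1) \setminus \{y_e\}$ bijectively. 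This $(d-1)$--element set splits into exactly $\kk$ orbits under $\G_{e,\gamma'_{t_1}(x)}$ by the primitivity of $F$ (Remark~\ref{rem::number_orbits}), and a distance-to-$x$ argument forbids endpoint swaps, making these $K$--orbits of the corresponding edges. Hence all $\kk$ primitive classes arise. The main obstacle is the geometric bookkeeping in the second step: matching the decomposition of $\eta$ against that of $\gamma_1 k \gamma_2$ so that the correction factors $a_i, b_j$ land in $\G_e$, thereby providing the $K$--primitive pair witnesses needed to invoke Proposition~\ref{prop::product_of_trans_lenght_two}.
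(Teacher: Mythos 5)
Your proof is correct and follows essentially the same route as the paper's: reduce maximality of $K\eta K$ to the no-backtracking condition on the middle element $k$ in $\gamma_1 k \gamma_2$, match the length-$2$ decomposition of the product against $\gamma'_1\cdots\gamma'_{t_1-1}$ and $\gamma''_1\cdots\gamma''_{t_2}$ via $K$--primitive pairs so that only the $t_1$-th factor varies within $A^{+}\cap K\gamma'_{t_1}K$, and then count $\kk$ classes using Proposition~\ref{prop::product_of_trans_lenght_two} and Lemma~\ref{lem::number_primitive_pairs}. Your bookkeeping with the correction factors $a_i, b_j \in \G_e$ and the explicit sweep argument for realizing all $\kk$ primitive classes are somewhat more detailed than the paper's treatment, but the underlying argument is the same.
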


\begin{proof}
Let $(\xi_{i-}, \xi_{i+})$ be the translation axis of $\gamma_i$, for $i \in \{1,2\}$. As $K \gamma_1 \gamma_2 K$ is evidently a maximal $K$--double coset of $K\gamma_1 K \gamma_2 K$, it remains to find all other $K$--double cosets $K \eta K$ such that $\eta \in A^+$ and $\vert \eta \vert= \vert \gamma_1 \vert + \vert \gamma_2 \vert$.

To compute the maximal $K$--double cosets that appear in the decomposition of $K\gamma_1 K \gamma_2 K$, it is enough to study elements of the form $\gamma_1 k \gamma_2$ for $k \in K$ with the property that $k [x, \xi_{2+}] \cap [\xi_{1-},\xi_{1+}] \subset  [x, \xi_{1+}]$. Indeed, if $k[x, \xi_{2+}] \cap [\xi_{1-},\xi_{1+}]$ was not just equal to $x$ and not a subset of $(x, \xi_{1+}]$, one would see that $K\gamma_1 k \gamma_2K$ is not a maximal $K$--double coset of $K\gamma_1 K \gamma_2 K$.

Consider first the case when $k \in \G_{e}$. For such $k$ we claim that $K\gamma_1 k \gamma_2 K=K\gamma_1 \gamma_2 K$. Indeed, we apply  Lemma~\ref{lem::product_two_primitive}, as we need to find $k_1 \in \G_e$ such that $k_1 \gamma_1 k \gamma_2(x)= \gamma_1 \gamma_2 (x)$. The claim follows. In addition, notice that such $k_1$ fixes point-wise the geodesic segment $[x, \gamma_1(e)]$. 

Consider now the case when $k \in K$ such that $k(e) \notin  (\gamma_{1-}, \gamma_{1+})$. Notice that for every edge in the star of $x$, which is not on the bi-infinite geodesic $(\gamma_{1-}, \gamma_{1+})$, there exists some $k \in K$ sending $e$ to it. For such $k \in K$ we need to decompose the element  $\gamma_1 k \gamma_2$, which is in $A^{+}$, using elements $\gamma \in A^{+}$ with $\vert \gamma \vert =2$. By the hypothesis on $k$ we have that $\vert \gamma_1 k \gamma_2 \vert= \vert \gamma_1 \vert + \vert \gamma_2 \vert= t_1 +t_2$. There exist thus $\gamma', \gamma'' \in A^{+}$ such that $\gamma_1 k \gamma_2 = \gamma' \gamma''$, with $\vert \gamma' \vert= \vert \gamma_1 \vert $ and $\vert \gamma'' \vert =\vert \gamma_2 \vert$. Because of the choice of $k$, we have that $\gamma_1(x)=\gamma'(x)$; thus $(\gamma')^{-1} \gamma_1 \in K$ and $\gamma'(e)$ belongs to the star of $\gamma_1(x)$, being different from $\gamma_{1}(e)$ and the edge belonging to the geodesic segment $[x, \gamma_1(x)]$. Therefore, the first $t_{1}-1$ terms of the decomposition of $\gamma_1 k \gamma_2$ are $\gamma_1' \gamma_2' \cdots \gamma_{t_1-1}'$ and the $t_1$--term appearing in the decomposition of $\gamma_1 k \gamma_2$ is an element $\gamma \in A^{+} \cap K\gamma_{t_1}'K$. Notice that this $t_1$--term depends strictly on the element $k$ and up to $K$-left-right multiplication, every element of $A^{+} \cap K\gamma_{t_1}'K$ appears as a $t_1$--term of $\gamma_1 k \gamma_2$. From the equality $\gamma_1 k \gamma_2 = \gamma' \gamma''$ and the above properties we obtain that $K \gamma_2 K= K \gamma'' K$. Following Lemma~\ref{lem::product_two_primitive}, we conclude that there exists $k' \in K$ such that $k' \gamma' \gamma''(x)= \gamma' \gamma_2(x)$. 

\medskip
Combining the two cases studied above and using Proposition~\ref{prop::product_of_trans_lenght_two}, we obtain that $K \gamma_1 k \gamma_2 K= K \gamma_1' \gamma_2' \cdots \gamma_{t_1-1}' \gamma \gamma''_1\gamma''_2\cdots \gamma''_{t_2}K$, where $\gamma \in A^{+} \cap K \gamma'_tK$. Moreover, up to $K$--primitivity, there are $\kk$ such $\gamma$ elements in $A^{+} \cap K \gamma'_tK$ and the last part of the proposition follows. 
\end{proof}



The next proposition computes the maximal coefficients appearing in $\mathbbm{1}_{K\gamma_1K} \ast \mathbbm{1}_{K\gamma_2K}$.

\begin{proposition}
\label{prop::max_coeff}
Let $F$ be primitive and let $\gamma_1, \gamma_2 \in A^{+}$. By Proposition~\ref{prop::product_of_trans_lenght_two}, let $\gamma_1= \gamma'_1\gamma'_2\cdots \gamma'_{t_1}$ and $\gamma_2= \gamma''_1\gamma''_2\cdots \gamma''_{t_2}$,  where $\gamma'_i , \gamma''_j \in A^{+}$ are such that $\vert \gamma'_i \vert=\vert \gamma''_j \vert =2$, for every $i \in \{1,\cdots, t_1\}$ and every $j \in \{1,\cdots, t_2\}$.

Then every maximal coefficient appearing in $\mathbbm{1}_{K\gamma_1K} \ast \mathbbm{1}_{K\gamma_2K}$ equals $\mu (K)$, where $\mu$ is the left Haar measure on $\G$. In particular, by normalizing $\mu(K)=1$, we have that all maximal coefficients equal one.
\end{proposition}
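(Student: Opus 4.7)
The plan is to evaluate $(\mathbbm{1}_{K\gamma_1K} \ast \mathbbm{1}_{K\gamma_2K})(\eta)$ at a carefully chosen representative $\eta$ of each maximal $K$--double coset and identify the integration domain as a single left $K$--coset. By left invariance of the Haar measure, a change of variable yields
\[
(\mathbbm{1}_{K\gamma_1K} \ast \mathbbm{1}_{K\gamma_2K})(\eta) = \mu\bigl(\{h \in K\gamma_1 K \; \vert \; h^{-1}\eta \in K\gamma_2 K\}\bigr),
\]
and since the convolution is $K$--bi-invariant, the coefficient of $\mathbbm{1}_{K\eta K}$ is obtained by any such evaluation.

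Applying Proposition~\ref{prop::KKK}, each maximal double coset admits a representative $\eta = \tilde\gamma_1 \gamma_2$ where $\tilde\gamma_1 := \gamma_1'\gamma_2'\cdots \gamma_{t_1-1}' \gamma$ for some $\gamma \in A^{+} \cap K\gamma_{t_1}' K$. By Proposition~\ref{prop::product_of_trans_lenght_two} one has $\tilde\gamma_1 \in K\gamma_1 K \cap A^{+}$, and by Remark~\ref{rem::prod_of_two_hyp} also $\eta \in A^{+}$ with $\vert \eta \vert = \vert \tilde\gamma_1 \vert + \vert \gamma_2 \vert = \vert \gamma_1 \vert + \vert \gamma_2 \vert$; in particular $\dist_\T(x,\eta(x)) = \vert \gamma_1 \vert + \vert \gamma_2 \vert$. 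Setting $y := \tilde\gamma_1(x)$, we have $\dist_\T(x, y) = \vert \gamma_1 \vert$ and $\dist_\T(y, \eta(x)) = \vert \gamma_2 \vert$, so that $y$ lies on the (unique) tree geodesic $[x, \eta(x)]$. The key claim to establish is
\[
\{h \in K\gamma_1 K \; \vert \; h^{-1}\eta \in K\gamma_2 K\} = \tilde\gamma_1 K,
\]
which, as a single left coset of $K$, has measure $\mu(K)$.

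For the forward inclusion I would argue by tree geometry: any such $h$ satisfies $\dist_\T(x, h(x)) = \vert \gamma_1 \vert$ (because $h \in K\gamma_1 K$ fixes $x$ up to $K$ and sends it to a $K$--translate of $\gamma_1(x)$) and $\dist_\T(h(x), \eta(x)) = \dist_\T(x, h^{-1}\eta(x)) = \vert \gamma_2 \vert$. Since these two distances sum to $\dist_\T(x, \eta(x))$ and tree geodesics are unique, $h(x)$ must equal $y$; as the stabilizer of $x$ in $\G$ is $K$, this forces $h \in \tilde\gamma_1 K$. The reverse inclusion is a direct verification: writing $h = \tilde\gamma_1 k$ with $k \in K$ gives $h \in K\gamma_1 K$ and $h^{-1}\eta = k^{-1}\gamma_2 \in K\gamma_2 K$. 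The only step that requires any care is the additivity $\vert \eta \vert = \vert \gamma_1 \vert + \vert \gamma_2 \vert$ together with the realisation of the midpoint by $\tilde\gamma_1(x)$, which is exactly what the structural results of Section~\ref{subsec::indep_hyp_elements} provide; the rest is purely formal.
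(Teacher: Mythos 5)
Your proof is correct and takes essentially the same approach as the paper: both reduce the maximal coefficient to the Haar measure of an intersection of translated $K$--double cosets and identify that set as a single left $K$--coset (hence of measure $\mu(K)$) via tree geometry. The only cosmetic difference is that you phrase the geometric step through additivity of distances along the unique geodesic $[x,\eta(x)]$, forcing $h(x)=\tilde\gamma_1(x)$, whereas the paper left-translates so that the set sits inside $K$ and argues via the intersection of two closed balls meeting only at $x$; these are the same argument.
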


\begin{proof}
As in equality~(\ref{equ::gen_decomposition}) we have that $\mathbbm{1}_{K\gamma_1K} \ast \mathbbm{1}_{K\gamma_2K}= \sum\limits_{i \in I} a_i \mathbbm{1}_{K\eta_iK}$, where $I$ is a finite set, $a_i \in \mathbb{C}$ and $\eta_i \in A^{+}$ for every $i \in I$. Let $K\eta_iK$ be a maximal $K$--double coset and let $h \in K\eta_iK$. By Proposition~\ref{prop::KKK} $$\eta_i= \gamma'_1\gamma'_2\cdots \gamma'_{t_1-1}  \ \gamma \ \gamma''_1\gamma''_2\cdots \gamma''_{t_2}=\gamma_1 (\gamma'_{t_1})^{-1} \gamma \gamma_2$$ where $\gamma$ is an element of  $A^{+} \cap K\gamma'_{t_1} K$. In particular, using Proposition~\ref{prop::product_of_trans_lenght_two}, we can suppose, without loss of generality, that $\gamma(x)= \gamma_{t_1}'(x)$. 

We want to compute $\mathbbm{1}_{K\gamma_1K} \ast \mathbbm{1}_{K\gamma_2K}(h)$ which equals $a_i$. As all functions involved are $K$--bi-invariant, we can suppose that $h= \eta_i$. It remains to evaluate $$\int_{G}  \mathbbm{1}_{K\gamma_1K}(\eta_i g )\mathbbm{1}_{K\gamma_2K}(g^{-1})d\mu(g).$$ This reduces to find all $g \in K \gamma_2^{-1}K$ such that $\eta_i g \in K \gamma_1K$; this is equivalent to evaluate the intersection $\eta_i^{-1} K \gamma_1 K \cap K \gamma_2^{-1}K$. We would have that 
\begin{equation*}
\begin{split}
\mathbbm{1}_{K\gamma_1K} \ast \mathbbm{1}_{K\gamma_2K}(\eta_i)&=a_i= \mu(\eta_i^{-1} K \gamma_1 K \cap K \gamma_2^{-1}K )\\
&= \mu(\gamma_2^{-1} (\gamma_1 (\gamma'_{t_1})^{-1} \gamma)^{-1} K \gamma_1 K \cap K \gamma_2^{-1}K )\\
&= \mu( \gamma^{-1} (\gamma'_1\gamma'_2\cdots \gamma'_{t_1-1})^{-1} K \gamma_1 K \cap \gamma_2 K \gamma_2^{-1}K ).
\end{split}
\end{equation*}

Notice the following. For every $g \in \gamma_2 K \gamma_2^{-1}K$, we have $$\dist_{\T}(g(x), \gamma_2(x))= \dist_{\T}( \gamma_2 k_1 \gamma_2^{-1}k_2 (x), \gamma_2(x))=\vert \gamma_2\vert .$$ For $g \in  \gamma^{-1} (\gamma'_1\gamma'_2\cdots \gamma'_{t_1-1})^{-1} K \gamma_1 K$ we have $$\dist_{\T}(g(x), \gamma^{-1} (\gamma'_1\gamma'_2\cdots \gamma'_{t_1-1})^{-1}(x))= \dist_{\T}(k_3 \gamma_1(x) , x)= \vert \gamma_1 \vert.$$ 

As $\gamma_2(x) \in \T_{x, e}$ and $\gamma^{-1} (\gamma'_1\gamma'_2\cdots \gamma'_{t_1-1})^{-1}(x) \in \T \setminus \T_{x, e}$, and because $\vert \gamma^{-1} (\gamma'_1\gamma'_2\cdots \gamma'_{t_1-1})^{-1} \vert= \vert \gamma_1^{-1} \vert $ we obtain that $$\overline{B}(\gamma^{-1} (\gamma'_1\gamma'_2\cdots \gamma'_{t_1-1})^{-1}(x), \vert \gamma_1\vert ) \cap \overline{B}(\gamma_2(x), \vert \gamma_2\vert )=\{x\}$$ where $\overline{B}(y, r) \subset \T$ denotes the closed ball centered at the vertex $y$ and of radius $r$. This implies that for every $g \in \gamma^{-1} (\gamma'_1\gamma'_2\cdots \gamma'_{t_1-1})^{-1} K \gamma_1 K \cap \gamma_2 K \gamma_2^{-1}K$ we necessarily have that $g(x)=x$; therefore, $\gamma^{-1} (\gamma'_1\gamma'_2\cdots \gamma'_{t_1-1})^{-1} K \gamma_1 K \cap \gamma_2 K \gamma_2^{-1}K \subset K$. In fact, we claim that $$\gamma^{-1} (\gamma'_1\gamma'_2\cdots \gamma'_{t_1-1})^{-1} K \gamma_1 K \cap \gamma_2 K \gamma_2^{-1}K = K.$$ Indeed, it is immediate that $K \subset \gamma_2 K \gamma_2^{-1}K $. It remains to show that $K \subset \gamma^{-1} \gamma_{t_1}' \gamma_1^{-1} K \gamma_1 K$. As we have supposed that $\gamma(x)= \gamma_{t_1}'(x)$, we have that $ \gamma^{-1}\gamma_{t_1}'  \in K$, so $\gamma^{-1} \gamma_{t_1}' \gamma_1^{-1} \gamma_1 K= K$. This proves the claim and we obtain that $$a_i=\mu( \gamma^{-1} (\gamma'_1\gamma'_2\cdots \gamma'_{t_1-1})^{-1} K \gamma_1 K \cap \gamma_2 K \gamma_2^{-1}K )=\mu(K).$$ The conclusion follows.

\end{proof}

\section{The Hecke algebra is infinitely generated}
\label{sec::infinite_gen}
The goal of this section is to prove that the Hecke algebra $C_{c}(\G,K)$ endowed with the convolution product is infinitely generated.

By contraposition suppose that $C_{c}(\G,K)$ would be a finitely generated algebra with respect to the convolution product. Let $S=\{f_1, \cdots, f_m\}$ be a finite set of generators for $C_{c}(\G,K)$. Then every $f \in C_{c}(\G,K)$ would be written as a finite linear combination of $\{f_1, \cdots, f_m\}$ and finite convolution products of those. By equation~(\ref{equ::gen_decomposition}) we decompose every $f_i \in S$ as $f_i= \sum_{j=1}^{n_i} a_{ij} \mathbbm{1}_{K\gamma_{ij} K}$, where $\gamma_{ij} \in A^{+}$ and $a_{ij} \in \mathbb{C}$. One remarks that the set $S':=\{\mathbbm{1}_{K\gamma_{ij} K}\}_{ij} \subset C_{c}(\G,K)$ also finitely generates $C_{c}(\G,K)$. In addition, by letting $N:=\max_{\gamma_{ij}}(\vert \gamma_{ij} \vert) < \infty$ the set $S'':=\{\mathbbm{1}_{K\gamma K} \; \vert \;  \gamma \in A^{+} \text{ with } \vert \gamma \vert \leq N \}$  finitely generates $C_{c}(\G,K)$ too. Therefore, if $C_{c}(\G,K)$ was finitely generated, we could assume, without loss of generality, that there exists a set $S''=\{\mathbbm{1}_{K\gamma K} \; \vert \;  \gamma \in A^{+} \text{ with } \vert \gamma \vert \leq N \}$ that finitely generates  $C_{c}(\G,K)$.

It remains to investigate the subsets $\{\gamma_1, \cdots, \gamma_n\} \subset A^+$ such that $\{\mathbbm{1}_{K\gamma_1 K}, \cdots, \mathbbm{1}_{K\gamma_n K} \}$ would finitely generate $C_{c}(\G,K)$ with respect to the convolution product. For this we need to solve in $C_{c}(\G,K)$ systems of equations of convolution products and to introduce a notion of linear independence for such systems of equations.

\subsection{Sub-bases and weakly linearly independent equations of degree $n$}
In Section~\ref{subset::product_two_double_classes} we have computed the convolution product of two functions $\mathbbm{1}_{K\gamma_1K}, \mathbbm{1}_{K\gamma_2K} \in C_{c}(\G,K)$, where $\gamma_1, \gamma_2 \in A^+$. We have obtained the equation 
\begin{equation}
\label{emu::conv_two_product_system}
\mathbbm{1}_{K\gamma_1K} \ast \mathbbm{1}_{K\gamma_2K} = \sum\limits_{\vert \eta_{i} \vert < \vert \gamma_1\vert +\vert \gamma_2 \vert, \  i \in I } a_i \mathbbm{1}_{K\eta_i K}+ \sum\limits_{\vert \eta_j \vert = \vert \gamma_1\vert +\vert \gamma_2 \vert, \  j \in J } \mu(K) \mathbbm{1}_{K\eta_jK},
\end{equation}

where $I$ is a finite set, $a_i \in \mathbb{C}$, $\vert J \vert= \kk$, $\eta_i, \eta_j \in A^+$ and $\eta_j$, with $j \in J$, $\vert J \vert =\kk$, are as in Proposition~\ref{prop::KKK}.

\medskip
From now on we consider that the left Haar measure $\mu$ of $\G$ is normalized with $\mu(K)=1$.

\medskip
More generally, by considering $\gamma_1, \gamma_2, \cdots, \gamma_m \in A^+$ we obtain a more general equation 

\begin{equation}
\label{emu::conv_more_product_system}
\mathbbm{1}_{K\gamma_1K} \ast \mathbbm{1}_{K\gamma_2K} \ast \cdots \ast \mathbbm{1}_{K\gamma_mK}= \sum\limits_{\vert \eta_i \vert < \vert \gamma_1\vert + \cdots +\vert \gamma_m \vert, \  i \in I } a_i \mathbbm{1}_{K\eta_i K}+ \sum\limits_{\vert \eta_j \vert = \vert \gamma_1\vert + \cdots +\vert \gamma_m \vert, \  j \in J } \mathbbm{1}_{K\eta_jK},
\end{equation}
 where $I$ is a finite set, $a_i \in \mathbb{C}$, $\vert J \vert= \kk^{m-1}$, $\eta_i, \eta_j \in A^+$ and $\eta_j$ are accordingly as in Proposition~\ref{prop::KKK}.
 
\begin{definition}
\label{def::degree_equation}
Let $f \in C_{c}(\G,K)$. Recall that by equality~(\ref{equ::gen_decomposition})
\begin{equation*}
f= \sum\limits_{i \in I} a_i \mathbbm{1}_{K\eta_iK},
\end{equation*} where $I$ is a finite set, $a_i \in \mathbb{C}$ and $\eta_i \in A^{+}$ for every $i \in I$. We say that (the equation) $f$ is of \textbf{degree} $n$ if for every $i \in I$, $\vert \eta_i \vert \leq n$ and there exists $i \in I$ such that $\vert \eta_i \vert =n$. In particular, equation~(\ref{emu::conv_more_product_system}) is of degree $n$ if $ \vert \gamma_1\vert + \cdots +\vert \gamma_m \vert=n$. 

\end{definition}

\begin{definition}
\label{def::system_equations_degree_n}
Let $I$ be finite. We say that $\{E_i\}_{i \in I}$ is a \textbf{system of equations of degree $n$} if: 
\begin{enumerate}
\item
for every $i \in I$, $E_i:= \mathbbm{1}_{K\gamma_{i_1}K} \ast \mathbbm{1}_{K\gamma_{i_2}K} \ast \cdots \ast \mathbbm{1}_{K\gamma_{i_{m_i}}K}$  for some $\gamma_{i_j} \in A^+$, where $m_i$ is finite and $j \in \{1, \cdots, m_i\}$
\item
$E_i$ is of degree $n$ for every $i \in I$.
\end{enumerate}
\end{definition}

\begin{notation}
\label{not::E_conv}
To simplify the notation, from now on we reserve the letter $E$ to represent  a convolution product of the form $\mathbbm{1}_{K\gamma_1K} \ast \mathbbm{1}_{K\gamma_2K} \ast \cdots \ast \mathbbm{1}_{K\gamma_mK}$, where $\gamma_i \in A^+$ for every $i \in \{1, \cdots, m\}$.\end{notation}

\begin{definition}
\label{def::in_dependent_system_equations_degree_n}
A system of equations $\{E_1, \cdots, E_r\}$ of degree $n$ is \textbf{weakly linearly dependent} if there exist $b_1, \cdots, b_r \in \mathbb{C}$ not all zero such that $b_1 E_1+\cdots + b_r E_r$ is of degree strictly less than $n$. We say that a system of equations $\{E_1, \cdots, E_r\}$ of degree $n$ is \textbf{weakly linearly independent} if it is not weakly linearly dependent.
\end{definition}

\begin{definition}
\label{def::base_degree_n}
We say that the set $\{\mathbbm{1}_{K\gamma_i K}\}_{i \in I}$, where $I$ is finite and $\{\gamma_i\}_{i \in I} \subset A^+$, forms \textbf{a sub-base of $C_{c}(\G,K)$ of degree $ \leq n$} if the following conditions are satisfied:
\begin{enumerate}

\item $\vert \gamma_i \vert \leq n$, for every $i \in I$
\item every function $\mathbbm{1}_{K\eta K}$, with $\eta \in A^+$ and $0 < \vert \eta \vert \leq n $, can be written as a sum of a finite linear combination of $\{\mathbbm{1}_{K\gamma_i K}\}_{i \in I}$ and a finite linear combination of convolution products of $\{\mathbbm{1}_{K\gamma_i K}\}_{i \in I}$ having degree $\leq n$
\item
$\vert I \vert$ is minimal with the above properties.
\end{enumerate}
\end{definition}

\begin{remark}
\label{rem::sub_base_degree_2}
Let $F$ be primitive. It is immediate that a sub-base of $C_{c}(\G,K)$ of degree $2$ exists and it is unique. Its cardinality is $\kk$.
\end{remark}

Before considering the general case, let us warm up proving the following lemma.

\begin{lemma}
\label{lem::sub_base_degree_4}
Let $F$ be primitive. Then $C_{c}(\G,K)$ admits a sub-base of degree $\leq 4$. Its cardinality is $\kk+ \kk^2 (\kk-1)$. A sub-base of $C_{c}(\G,K)$ of degree $\leq 4$ is not unique.
\end{lemma}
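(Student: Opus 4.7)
The plan is to work in the graded quotient $W_4 := \{f \in C_c(\G,K) : \deg f \leq 4\} / \{f : \deg f < 4\}$, which by Lemma~\ref{lem::existence_indep_hyp_elements} has complex dimension $\kk^3$ with basis given by the classes of $\mathbbm{1}_{K\eta K}$ for $\eta \in A^+$ of translation length $4$. Any sub-base must contain at least $\kk$ linearly independent degree-$2$ functions: since every convolution product of sub-base elements (each of degree $\geq 2$) has degree $\geq 4$, the $\kk$-dimensional degree-$2$ part of $C_c(\G,K)$ can only be reached through linear combinations of degree-$2$ sub-base elements. By minimality we may take these to be $\mathbbm{1}_{K\gamma_i K}$ for $i = 1, \dots, \kk$, where $\gamma_i$ is a representative of the $i$-th $K$-double coset of translation length $2$.

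The key computation concerns the $\kk^2$ convolutions $E_{ij} := \mathbbm{1}_{K\gamma_i K} \ast \mathbbm{1}_{K\gamma_j K}$. By Propositions~\ref{prop::KKK} and~\ref{prop::max_coeff}, the image of each $E_{ij}$ in $W_4$ is the sum of $\kk$ distinct basis vectors with coefficient $1$, indexed by the $K$-primitivity classes of the first factor inside $K\gamma_i K$. I would then apply the uniqueness direction of Proposition~\ref{prop::product_of_trans_lenght_two} to the decomposition $\eta = \gamma \gamma'$ with $|\gamma|=|\gamma'|=2$: the $K$-primitivity class of $\gamma$ determines the index $i$, and the double coset $K\gamma'K$ determines $j$, both uniquely. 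Hence each basis vector of $W_4$ lies in the support of exactly one $E_{ij}$, so the $\kk^2$ leading parts have pairwise disjoint supports, are linearly independent, and span a $\kk^2$-dimensional subspace of $W_4$. Moreover, by bilinearity of convolution this subspace depends only on the degree-$2$ span of the sub-base, so no sub-base can enlarge it beyond $\kk^2$ dimensions through its degree-$2$ part.

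It follows that the sub-base must include at least $\kk^3 - \kk^2 = \kk^2(\kk-1)$ additional degree-$4$ elements in order to complete a spanning set of $W_4$, yielding total cardinality $\kk + \kk^2(\kk-1)$. To realize this bound, for each ordered pair $(i,j) \in \{1,\ldots,\kk\}^2$ I pick any $\kk - 1$ of the $\kk$ maximal cosets of $E_{ij}$ and place the corresponding indicators in the sub-base; the omitted maximal coset $K\eta K$ is then recovered as $\mathbbm{1}_{K\eta K} = E_{ij}$ minus the $\kk-1$ selected indicators minus the lower-degree part of $E_{ij}$, with the lower-degree part expressible through the degree-$2$ generators and, if needed, additional linear combinations of the $E_{kl}$'s whose lower-order contributions cancel. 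Non-uniqueness is then immediate: the $\kk$ independent choices of which maximal term to omit, repeated at each of the $\kk^2$ pairs $(i,j)$, produce $\kk^{\kk^2}$ distinct minimal sub-bases. The main obstacle I foresee is the careful bookkeeping of the lower-order (degree $0$ and degree $2$) contributions of the $E_{ij}$'s, ensuring they can be reabsorbed inside the span of the proposed sub-base; this relies on Proposition~\ref{prop::max_coeff} together with the explicit decomposition in Proposition~\ref{prop::KKK}, which pin down the relevant coefficients and allow one to exhibit the cancellations via equation~\eqref{emu::conv_two_product_system}.
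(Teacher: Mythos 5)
Your argument is correct and follows essentially the same route as the paper's: both rest on the fact (via Propositions~\ref{prop::KKK}, \ref{prop::max_coeff} and \ref{prop::product_of_trans_lenght_two}) that each of the $\kk^{2}$ products $\mathbbm{1}_{K\gamma_i K}\ast\mathbbm{1}_{K\gamma_j K}$ contributes exactly $\kk$ maximal cosets, each appearing in exactly one such product with coefficient $1$, so that one must adjoin $\kk-1$ of the $\kk$ maximal terms per equation, giving $\kk+\kk^{2}(\kk-1)$ and non-uniqueness from the choice of omitted term. Your graded-quotient $W_4$ phrasing is just a cleaner packaging of the paper's counting, and the lower-order bookkeeping you flag is glossed over in the paper as well.
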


\begin{proof} For the proof recall Definition~\ref{def::f_function} and Notation~\ref{not::classes_primitive}. Let also $\{\mathbbm{1}_{K\gamma_j K}\}_{j \in \{1, \cdots, \kk \}}$ be the sub-base of $C_{c}(\G,K)$ degree $2$. 

Let $\eta_1, \eta_2 \in \{\gamma^{j_1}_1, \cdots, \gamma^{j_\kk}_{\kk}\}_{j_1, \cdots, j_\kk \in \{1, \cdots, \kk\}}$. The unique way to obtain functions of the from $\mathbbm{1}_{K\eta_1^{i_1} \eta_2 K}$, with $i_1 \in \{1, \cdots, \kk\}$, as maximal $K$--double coset using the sub-base of degree $2$ is by convoluting  $\mathbbm{1}_{K\eta_1 K}$ and $\mathbbm{1}_{K\eta_2 K}$:
\begin{equation}
\label{equ::equ_deg_2}
\mathbbm{1}_{K\eta_1K} \ast \mathbbm{1}_{K\eta_2 K} = \sum\limits_{\vert \eta_i \vert < \vert \gamma_1\vert +\vert \gamma_2 \vert, \  i \in I } a_i \mathbbm{1}_{K\eta_i K}+ \sum\limits_{i=1}^{\kk} \mathbbm{1}_{K\eta_1^{i} \eta_2 K}.
\end{equation}
Following Proposition~\ref{prop::product_of_trans_lenght_two}, in equation~(\ref{equ::equ_deg_2}) appears all functions $\mathbbm{1}_{K\eta_1^{i_1} \eta_2 K}$, with $i_1 \in \{1, \cdots, \kk\}$. It is the only one where they can appear.

In order to find a sub-base of $C_{c}(\G, K)$ of degree $\leq 4$ we have to choose from equation~(\ref{equ::equ_deg_2})  $f'(2, \kk)=\kk-1$ different elements of $A^+$ of translation length $4$. This number is independent of the choices made for the elements $\{\gamma^{j_i}_i\}_{j,i=1}^{\kk}$. By doing this, one of the terms of degree $4$ appearing in equation~(\ref{equ::equ_deg_2}) will be written in terms of the chosen elements. Without loss of generality, we can suppose that we have chosen $\{\mathbbm{1}_{K\eta_{1}^{i}\eta_2K}\}_{i \in \{2, \cdots, \kk \}}$ to form a sub-base of degree $\leq 4$.  Moreover, by the same reasoning and making a choice, we choose the following set $\{\mathbbm{1}_{K\gamma^{j}_i \gamma_l k}\; \vert \; i,l \in \{1, \cdots, \kk\} \text{ and } j \in \{2, \cdots, \kk\}\} \cup \{\mathbbm{1}_{K\gamma_j K}\}_{j \in \{1, \cdots, \kk \}}$. It is immediate to see that this set is a sub-base of $C_{c}(\G,K)$ of degree $\leq 4$. Indeed, for example the function $\mathbbm{1}_{K\eta_{1}^{1}\eta_2K}$ can be written using equation~(\ref{equ::equ_deg_2}) in the chosen sub-base of degree $\leq 4$. No fewer elements than above can be chosen to form such a sub-base. The cardinality of the chosen sub-base of degree $\leq 4$ is $f(2, \kk) + f(1,\kk)= \kk^2 (\kk-1)+ \kk$; this number is independent of the choices made. Notice that the sub-base $\{\mathbbm{1}_{K\gamma^{j}_i \gamma_l k}\; \vert \; i,l \in \{1, \cdots, \kk\} \text{ and } j \in \{2, \cdots, \kk\}\} \cup \{\mathbbm{1}_{K\gamma_j K}\}_{j \in \{1, \cdots, \kk \}}$ is not unique.

\end{proof}
 
\begin{proposition}
\label{prop::sub_base_deg_r}
Let $F$ be primitive and let $\{\mathbbm{1}_{K\gamma_j K}\}_{j \in \{1, \cdots, \kk \}}$ be the sub-base of $C_{c}(\G,K)$ degree $2$. For every $r \geq 3$ there exists a sub-base of $C_{c}(\G,K)$ of degree $\leq 2r$. Its cardinality is $f(1,\kk)+f(2, \kk)+\cdots + f(r, \kk)$ and it is uniquely determined given the sub-base of degree $\leq 2(r-1)$. In particular, if $\kk=1$ then $f(1,\kk)+f(2, \kk)+\cdots + f(r, \kk)=1$, for every $r\geq 1$.
\end{proposition}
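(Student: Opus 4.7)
The plan is to argue by induction on $r \ge 3$, using Lemma~\ref{lem::sub_base_degree_4} as the base case (sub-base of degree $\le 4$). Assume a sub-base $B_{r-1}$ of degree $\le 2(r-1)$ exists with cardinality $\sum_{i=1}^{r-1} f(i,\kk)$, containing exactly $f(i,\kk)$ generators at translation length $2i$ for each $i \in \{1,\ldots,r-1\}$.

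First, I would enumerate all convolution products $E = \mathbbm{1}_{K\gamma_{j_1}K} \ast \cdots \ast \mathbbm{1}_{K\gamma_{j_m}K}$ whose factors come from $B_{r-1}$ and satisfy $\sum_s \vert \gamma_{j_s} \vert = 2r$. Grouping by ``shape'' $(k_1,\ldots,k_{r-1}) \in \mathit{Sum}(r)$, where $k_i$ counts factors of length $2i$, the total number of such ordered convolutions is
\[
N := \sum_{(k_1, \ldots, k_{r-1}) \in \mathit{Sum}(r)} \binom{k_1+\cdots+k_{r-1}}{k_1,\ldots,k_{r-1}} \prod_{i=1}^{r-1} f(i,\kk)^{k_i}.
\]
By Lemma~\ref{lem::combinatorial_formula}, $N = \kk^{2r-1} - f(r,\kk)$, while Lemma~\ref{lem::existence_indep_hyp_elements} gives exactly $\kk^{2r-1}$ distinct $K$-double cosets at level $2r$. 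By Equation~(\ref{emu::conv_more_product_system}) together with Proposition~\ref{prop::max_coeff}, each such $E$ decomposes as $(\text{terms of degree}<2r) + \sum_{j \in J_E}\mathbbm{1}_{K\eta_j K}$, where the set $\{K\eta_j K\}_{j \in J_E}$ of level-$2r$ max cosets is explicitly described by iterating Proposition~\ref{prop::KKK}.

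The crucial claim is that these $N$ convolution equations form a \emph{weakly linearly independent} system in the sense of Definition~\ref{def::in_dependent_system_equations_degree_n}; equivalently, the $N$ ``max-part'' indicator vectors, viewed inside the $\kk^{2r-1}$-dimensional $\CC$-space spanned by the level-$2r$ cosets, are linearly independent. Granted this, the system expresses $N = \kk^{2r-1} - f(r,\kk)$ of the level-$2r$ max cosets in terms of the remaining $f(r,\kk)$ cosets modulo degree $<2r$. I would then set $B_r := B_{r-1} \cup \{\mathbbm{1}_{K\eta K} : K\eta K \in \mathcal{R}\}$ for any choice of $f(r,\kk)$ ``free'' cosets $\mathcal{R}$ surviving the elimination. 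Minimality is forced by the rank being exactly $N$: any smaller addition leaves at least one level-$2r$ coset unreachable. Thus $\vert B_r \vert = \sum_{i=1}^{r} f(i,\kk)$ is uniquely determined by $B_{r-1}$. The final $\kk=1$ assertion is immediate, since $f(1,1)=1$ and $f(i,1)=0$ for $i \ge 2$.

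The main obstacle is establishing the weak linear independence claim. This is a combinatorial statement about how level-$2r$ max cosets are shared among convolution equations of different shapes; its proof should proceed by exhibiting a non-vanishing $N \times N$ minor of the max-part incidence matrix, using the primitive-pair framework of Section~\ref{subsec::indep_hyp_elements} (in particular Notation~\ref{not::classes_primitive} and Proposition~\ref{prop::product_of_trans_lenght_two}) to track which max cosets arise from which ordered convolutions. Lemma~\ref{lem::combinatorial_formula} serves as the quantitative sanity check that the equation count $N$ matches the expected rank $\kk^{2r-1}-f(r,\kk)$.
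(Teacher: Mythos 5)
Your proposal reproduces the architecture of the paper's proof (induction on $r$, counting the degree-$2r$ convolution equations formable from the sub-base of degree $\leq 2(r-1)$ via Lemma~\ref{lem::combinatorial_formula}, weak linear independence, then a rank argument for the cardinality), but it stops short of the one step that carries all the mathematical content. The weak linear independence of the $N=\kk^{2r-1}-f(r,\kk)$ equations, which you yourself defer as ``the main obstacle'' and for which you only say that a non-vanishing $N\times N$ minor ``should'' exist, is precisely what the paper actually proves. Its argument is a triangularity argument carried out family by family: the $\kk^{2r-1}$ level-$2r$ cosets split into $\kk^{r}$ families of size $\kk^{r-1}$ indexed by the underlying word $\eta_1\cdots\eta_r$ (so the incidence matrix is block-diagonal), and within a family the coset $\mathbbm{1}_{K\eta_1^{1}\cdots\eta_{r-1}^{1}\eta_rK}$ occurs in exactly one equation (the full product of length-$2$ generators), while every other equation singles out a unique ``maximal'' coset --- the one with the maximal number of indices equal to $1$ --- which reappears non-maximally elsewhere. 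This argument depends on an inductive invariant you never state: the sub-base of degree $\leq 2(r-1)$ must contain \emph{no} generator $\mathbbm{1}_{K\gamma_{j_1}^{i_1}\cdots\gamma_{j_{l-1}}^{i_{l-1}}\gamma_{j_l}K}$ with some $i_s=1$. Without carrying that invariant, the incidence pattern of max cosets is not controlled and the minor argument cannot even be set up.

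A second, related gap: you propose to complete the sub-base with ``any choice of $f(r,\kk)$ free cosets surviving the elimination,'' and then conclude only that the \emph{cardinality} is determined. The proposition asserts that the sub-base itself is uniquely determined by the one of degree $\leq 2(r-1)$, and the paper proves this by comparing, for each family, how often the cosets with all indices in $\{2,\cdots,\kk\}$ occur in the system versus those with some index equal to $1$; this forces the new generators to be exactly the cosets $\mathbbm{1}_{K\eta_1^{i_1}\cdots\eta_{r-1}^{i_{r-1}}\eta_rK}$ with $i_1,\cdots,i_{r-1}\in\{2,\cdots,\kk\}$, which is also what propagates the inductive invariant above. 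As written, your ``any choice'' step contradicts the statement you are trying to prove. The global count $N=\kk^{2r-1}-f(r,\kk)$ via Lemma~\ref{lem::combinatorial_formula} and the $\kk=1$ remark are fine.
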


\begin{proof}We prove the proposition by induction on $r$. First recall Definition~\ref{def::f_function} and Notation~\ref{not::classes_primitive}.  By Lemma~\ref{lem::sub_base_degree_4} we take $\{\mathbbm{1}_{K\gamma^{j}_i \gamma_l k}\; \vert \; i,l \in \{1, \cdots, \kk\} \text{ and } j \in \{2, \cdots, \kk\}\} \cup \{\mathbbm{1}_{K\gamma_j K}\}_{j \in \{1, \cdots, \kk \}}$ as a sub-base of $C_{c}(\G,K)$ of degree $\leq 4$.

Fix a sequence $\{\eta_i\}_{i \geq 1} \subset \{\gamma^{j_1}_1, \cdots, \gamma^{j_\kk}_{\kk}\}_{j_1, \cdots, j_\kk \in \{1, \cdots, \kk\}}$.

\medskip
Let $r=3$. Using the chosen sub-base of $C_{c}(\G,K)$ of degree $\leq 4$, we form all equations $E$ of degree $2r=6$ where functions of the form $\mathbbm{1}_{K\eta_{1}^{i_1}\eta_2^{i_2} \eta_{3}K}$, with $i_1,  i_{2} \in \{1, \cdots, \kk\}$, can appear (the total number of these functions is $\kk^{2}$). By a combinatorial argument, Lemmas~\ref{lem::combinatorial_formula},~\ref{lem::sub_base_degree_4} and equality~(\ref{emu::conv_more_product_system}) the total number of these equations is exactly $$\sum\limits_{(k_1, \cdots, k_{r-1}) \in \mathit{Sum}(r)}\left( \begin{array}{c} k_1+ \cdots +  k_{r-1}\\ k_1, \cdots, k_{r-1} \end{array} \right) f'(1,k)^{k_1} f'(2,k)^{k_2}\cdots f'(r-1,k)^{k_{r-1}}= \kk^{2}-f'(3, \kk).$$

We claim that all the above $\kk^{2}-f'(3, \kk)$ equations form a system of weakly linearly independent equations of degree $6$. Indeed, notice that all functions $\mathbbm{1}_{K\eta_{1}^{i_1}\eta_2^{i_2}\eta_3K}$, with $i_1, i_2 \in \{1, \cdots, \kk\}$, appear at least once in the above system of equations. In particular, these functions appear from the convolution product $\mathbbm{1}_{K\eta_{1} K} \ast \mathbbm{1}_{K\eta_2 K} \ast \mathbbm{1}_{K\eta_3 K} $. Moreover, the function $\mathbbm{1}_{K\eta_{1}^{1}\eta_2^{1}\eta_3K}$ appears only once, hence, only from the convolution product $\mathbbm{1}_{K\eta_{1} K} \ast \mathbbm{1}_{K\eta_2 K} \ast \mathbbm{1}_{K\eta_3 K} $. This is because the sub-base of degree $\leq 4$ is chosen to be $\{ \mathbbm{1}_{K\gamma^{j}_i \gamma_l K} \; \vert \; i,l \in \{1, \cdots, \kk\} \text{ and } j \in \{2, \cdots, \kk\}\} \cup \{\mathbbm{1}_{K\gamma_j K}\}_{j \in \{1, \cdots, \kk \}}$. Moreover, functions of the form $\mathbbm{1}_{K \eta_{1}^{i} \eta_2^{1} \eta_3 K}, \mathbbm{1}_{K \eta_{1}^{1} \eta_2^{i} \eta_3 K}$, for every $i \in \{2, \cdots, \kk \}$ appear twice: once from the convolution product $\mathbbm{1}_{K\eta_{1} K} \ast \mathbbm{1}_{K\eta_2 K} \ast \mathbbm{1}_{K\eta_3 K}$ and once, respectively, from the convolution product $\mathbbm{1}_{K\eta_{1}^{j}\eta_2 K} \ast \mathbbm{1}_{K\eta_3 K}$, $\mathbbm{1}_{K\eta_{1}K}  \ast \mathbbm{1}_{K\eta_2^{j}\eta_3 K}$, where $j \in \{2, \cdots, \kk \}$. These remarks imply that the system of equations we are interested in are indeed weakly linearly independent. This proves our claim.

By the theory of linear algebra, in order to form a sub-base of degree $\leq 6$, given the sub-base of degree $\leq 4$, we have to choose (from the above system of equations of degree $6$) $f'(3, \kk)$ characteristic functions corresponding to $K$--double cosets of degree $6$. This argument is independent and valid for every of the $\kk^{3}$ systems of equations.

By the proof of the above claim and above facts, to obtain a sub-base of degree $\leq 6$ one \textit{uniquely} can add the set $\{\mathbbm{1}_{K\gamma_{j_1}^{i_1}\gamma_{j_2}^{i_2}\gamma_{j_{3}} K}\}$, where $i_1, i_2 \in \{2, \cdots, \kk \}$ and $j_1, j_2, j_{3} \in \{1, \cdots, \kk \}$. This set together with the sub-base of $C_{c}(\G,K)$ of degree $\leq 4$ is minimal and its cardinality is indeed $f(1,\kk)+f(2, \kk)+f(3, \kk)$. In particular, all the functions of the form $\mathbbm{1}_{K\gamma_{j_1}^{1}\gamma_{j_2}^{i_2}\gamma_{j_{3}} K}, \mathbbm{1}_{K\gamma_{j_1}^{i_1}\gamma_{j_2}^{1}\gamma_{j_{3}} K}$, with $i_1, i_2, j_1, j_2, j_{3} \in \{1, \cdots, \kk \}$, are written using the chosen sub-base of degree $\leq 6$; they do not appear as elements of that sub-base.

Let us now suppose that the conclusion of the proposition is true for all $\leq r$ and that the sub-base of $C_{c}(\G,K)$ of degree $\leq 2r$ does not contain any function of the form $$\mathbbm{1}_{K\gamma_{j_1}^{i_1}\gamma_{j_2}^{i_2} \cdots \gamma_{j_{l-1}}^{i_{l-1}}\gamma_{j_l} K}$$ where $l \leq r$ and at least one of $\{i_1, \cdots, i_{l-1}\}$ is $1$. We have to prove this is also true for $r+1$. Indeed, using only the sub-base of degree $\leq 2r$, constructed in the previous induction steps, we form all equations $E$ of degree $2(r+1)$ where functions of the form $\mathbbm{1}_{K\eta_{1}^{i_1}\eta_2^{i_2} \cdots\eta_r^{i_r} \eta_{r+1}K}$, with $i_1, i_2, \cdots, i_r \in \{1, \cdots, \kk\}$, can  appear (the total number of these functions is $\kk^{r}$). By a combinatorial argument, Lemmas~\ref{lem::combinatorial_formula},~\ref{lem::sub_base_degree_4} and equality~(\ref{emu::conv_more_product_system}) the total number of these equations is exactly $$\sum\limits_{(k_1, \cdots, k_{r}) \in \mathit{Sum}(r+1)}\left( \begin{array}{c} k_1+ \cdots +  k_{r}\\ k_1, \cdots, k_{r} \end{array} \right) f'(1,k)^{k_1} f'(2,k)^{k_2}\cdots f'(r,k)^{k_{r}}= \kk^{r}-f'(r+1, \kk).$$

We claim that all the above $\kk^{r}-f'(r+1, \kk)$ equations form a system of weakly linearly independent equations of degree $2(r+1)$. Indeed, every function $\mathbbm{1}_{K\eta_{1}^{i_1}\eta_2^{i_2} \cdots \eta_r^{i_r} \eta_{r+1}K}$ with $i_1, \cdots, i_{r} \in \{1, \cdots, \kk\}$ appears at least once in the above system of equations, specifically,  from the convolution product $\mathbbm{1}_{K\eta_{1} K} \ast \mathbbm{1}_{K\eta_2 K} \ast  \cdots \ast \mathbbm{1}_{K\eta_{r+1} K} $. In addition, the function $\mathbbm{1}_{K\eta_{1}^{1}\eta_2^{1} \cdots \eta_r^{1} \eta_{r+1}K}$ appears exactly once in that system of equations. Moreover, every equation of the above system determines in a unique way a function $\mathbbm{1}_{K\eta_{1}^{i_1}\eta_2^{i_2} \cdots \eta_r^{i_r} \eta_{r+1}K}$ where the number of appearances of $1$ among the coefficients $i_1, \cdots, i_r$ is  maximal. This function uniquely depends on the convolution product involved in that equation. In addition, there exists at least one different equation of that system of equations such that the latter mention function appears, but it is not anymore `maximal'. The only exception is the function $\mathbbm{1}_{K\eta_{1}^{1}\eta_2^{1} \cdots \eta_r^{1} \eta_{r+1}K}$.

If the above system of equations $(E_1, \cdots, E_n)$, where $n= \kk^{r}-f'(r+1, \kk)$, was not weakly linearly independent there would exist coefficients $c_1, \cdots, c_n \in \mathbb{C}$, not all zero, such that $c_1E_1+ \cdots +c_n E_n$ is of degree strictly less then $2(r+1)$. By the remarks made above this would imply that we have reduced all functions $\mathbbm{1}_{K\eta_{1}^{i_1}\eta_2^{i_2} \cdots \eta_r^{i_r} \eta_{r+1}K}$ where at least one of $\{i_1, \cdots, i_l\}$ is $1$. This cannot be possible as every equation determines in a unique way a `maximal' such function. Therefore, the weakly linearly independence follows. This argument is independent and valid for every of the other $\kk^{r+1}$ systems of equations. 

It remains to prove that there is only one choice in order to complete the sub-base of degree $\leq 2r$ to a sub-base of degree $ \leq 2(r+1)$ and that the chosen sub-base of degree $2(r+1)$ is indeed $\mathbbm{1}_{K\eta_{1}^{i_1}\eta_2^{i_2} \cdots \eta_r^{i_r} \eta_{r+1}K}$, for $i_1, \cdots, i_{r} \in \{2, \cdots, \kk\}$. This follows from the fact that every function of the form $\mathbbm{1}_{K\eta_{1}^{i_1}\eta_2^{i_2} \cdots \eta_r^{i_r} \eta_{r+1}K}$, where $i_1, \cdots, i_{r} \in \{2, \cdots, \kk\}$ appears more than every other function $\mathbbm{1}_{K\eta_{1}^{i_1}\eta_2^{i_2} \cdots \eta_r^{i_r} \eta_{r+1}K}$, where at least one of the coefficients $i_1, \cdots, i_r$ is $1$. This concludes the induction step and also the proof of the proposition.

\end{proof}

\subsection{The proof of the main theorem}
\label{subsec::infinite_generation}

\begin{theorem}
\label{thm::inf_gen_hecke_alg}
Let $F$ be primitive. If $\kk=1$ then the Hecke algebra $C_{c}(\G,K)$ is finitely generated admitting only one generator. If $\kk >1$ then the Hecke algebra $C_{c}(\G,K)$ is infinitely generated with an infinite presentation.
\end{theorem}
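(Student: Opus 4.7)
The plan is to split according to the value of $\kk$.

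When $\kk=1$ (equivalently, $F$ is $2$--transitive), Remark~\ref{rem::K_cosets} tells us that $A^{+}$ reduces to a cyclic monoid $\langle a\rangle^{+}$ with $|a|=2$, so the $K$--double cosets of $\G$ are exactly $\{Ka^{n}K\}_{n\geq 0}$. Setting $g:=\mathbbm{1}_{KaK}$, Propositions~\ref{prop::KKK} and~\ref{prop::max_coeff} give $g^{\ast n}=\mathbbm{1}_{Ka^{n}K}+(\text{lower-degree terms})$: with $\kk=1$ the unique maximal coset appearing in the $n$-fold convolution is $Ka^{n}K$, with coefficient one. An induction on $n$ then expresses every $\mathbbm{1}_{Ka^{n}K}$ as a polynomial in $g$ under convolution, so $g$ is a single generator of $C_{c}(\G,K)$.

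For $\kk>1$ I argue by contraposition. Assuming that $C_{c}(\G,K)$ is finitely generated, the preamble to Section~\ref{sec::infinite_gen} reduces the task to showing that no set of the form $S'':=\{\mathbbm{1}_{K\gamma K}\mid\gamma\in A^{+},\,|\gamma|\leq N\}$ can generate. Fix such an $S''$ and pick any $r$ with $2r>N$. Proposition~\ref{prop::sub_base_deg_r} furnishes a canonical sub-base of degree $\leq 2r$ of cardinality $\sum_{i=1}^{r}f(i,\kk)=\sum_{i=1}^{r}\kk^{i}(\kk-1)^{i-1}$, which tends to infinity with $r$; the new layer at degree $2r$ contributes $f(r,\kk)\geq 1$ additional characteristic functions, so I may fix one of them, call it $\mathbbm{1}_{K\gamma^{\ast}K}$ with $|\gamma^{\ast}|=2r$.

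To conclude I show that $\mathbbm{1}_{K\gamma^{\ast}K}$ cannot be written as any $\CC$--linear combination of convolution products of elements of $S''$. Given a putative identity
\[
\mathbbm{1}_{K\gamma^{\ast}K}=\sum_{\alpha}c_{\alpha}E_{\alpha},
\]
where each $E_{\alpha}$ is a finite convolution product of elements of $S''$, I project onto the degree-$2r$ graded piece. Products $E_{\alpha}$ of degree strictly less than $2r$ contribute nothing; products of degree exactly $2r$ contribute $\kk^{m_{\alpha}-1}$ maximal terms, each with coefficient one by Propositions~\ref{prop::KKK} and~\ref{prop::max_coeff}; products of degree strictly greater than $2r$ contribute only via cancellation of their top layers. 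The analysis in the proof of Proposition~\ref{prop::sub_base_deg_r}, in particular the fact that the system of weakly linearly independent equations of degree $2r$ leaves exactly $f(r,\kk)$ genuinely new characteristic functions, is what forbids $\mathbbm{1}_{K\gamma^{\ast}K}$ from being reached by the degree-$\leq 2r$ products; I extend this rigidity by induction on the degree of the $E_{\alpha}$ to handle the higher-degree convolution products whose maximal coefficients cancel in the sum.

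The main obstacle is precisely this last step: controlling the degree-$2r$ projection of convolution products whose total degree exceeds $2r$, in the presence of cancellations among their maximal terms. The combinatorial backbone is Lemma~\ref{lem::combinatorial_formula}, which pins down the count $f(r,\kk)$ of genuinely new $K$--double cosets at each degree and which therefore must be invoked uniformly across all degrees $>2r$ in order to rule out such cancellations. Once infinite generation is established, the statement about infinite presentation follows at once: any finite presentation of an algebra provides, a fortiori, a finite generating set, which we have just shown does not exist.
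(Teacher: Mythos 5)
Your proposal follows essentially the same route as the paper: the $\kk=1$ case via the polar decomposition $\{Ka^{n}K\}_{n\geq 0}$ and an induction expressing $\mathbbm{1}_{Ka^{n}K}$ as a convolution polynomial in $\mathbbm{1}_{KaK}$, and the $\kk>1$ case by reducing a hypothetical finite generating set to some $S''$ of bounded degree and then contradicting the strict growth of the sub-bases furnished by Proposition~\ref{prop::sub_base_deg_r}. The one step you single out as the main obstacle --- ruling out expressions of $\mathbbm{1}_{K\gamma^{*}K}$ as combinations of convolution products of degree greater than $2r$ whose maximal terms cancel --- is precisely the point the paper's own proof leaves implicit (its contradiction tacitly assumes that generation by the degree-$\leq N$ sub-base would force expressibility by products of degree $\leq 2r$, which is what Definition~\ref{def::base_degree_n} controls), so your sketch is no less complete than the published argument on this point, though neither text fully discharges it.
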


\begin{proof}Let $\kk=1$. Then $F$ is $2$--transitive. By Remark~\ref{rem::K_cosets} we have that $A^{+}=<a>^{+}$ where $a$ is a hyperbolic element of $\G$, with $\vert a \vert=2$ and $x \in \Min(a)$. The $K$--double cosets of $\G$ are just $\{Ka^{n}K\}_{n \geq 0}$. It is then easy to see that the function $\mathbbm{1}_{K a K}$ alone generates the Hecke algebra $C_{c}(\G,K)$ (one can also apply the general result of Proposition~\ref{prop::sub_base_deg_r}). 

Consider now the case $\kk>1$ and suppose that $C_{c}(\G,K)$ is a finitely generated algebra with respect to the convolution product. Arguing as in the introduction to Section~\ref{sec::infinite_gen} we can assume, without loss of generality,  that $C_{c}(\G,K)$ is finitely generated by the set $S''=\{\mathbbm{1}_{K\gamma K} \; \vert \;  \gamma \in A^{+} \text{ with } \vert \gamma \vert \leq N \}$, for some $N \in \mathbb{N}^*$. By Proposition~\ref{prop::sub_base_deg_r}, there exists a sub-base of $C_{c}(\G,K)$ of degree $\leq N$ and by our assumption this sub-base of $C_{c}(\G,K)$ of degree $\leq N$ finitely generates $C_{c}(\G,K)$. This is in contradiction with Proposition~\ref{prop::sub_base_deg_r} as the sub-base of $C_{c}(\G,K)$ of degree $\leq N+1$ strictly contains the one of degree $\leq N$. Therefore, $C_{c}(\G,K)$ is infinitely generated. Its infinitely presentation is the one coming from the convolution products of the generators.

 \end{proof}

\begin{bibdiv}
\begin{biblist}

\bib{Amann}{thesis}{
author={Amann, Olivier},
 title={Group of tree-automorphisms and their unitary representations},
 note={PhD thesis},
 school={ETH Z\"urich},
 year={2003},
 }
 
\bib{bernstein74-type-I}{article}{
 author ={Bernstein, I.N.},
  title ={All reductive $p$-adic groups are of type I},
  journal ={Functional Anal. Appl.},
  note={English translation},
  volume =	{8},
  date ={1974},
  pages ={91--93},
} 

\bib{BM00a}{article}{
   author={Burger, Marc},
   author={Mozes, Shahar},
   title={Groups acting on trees: from local to global structure},
   journal={Inst. Hautes \'Etudes Sci. Publ. Math.},
   number={92},
   date={2000},
   pages={113--150 (2001)},
}

\bib{CaCi}{article}{
  author={Caprace, P-E.},
  author={Ciobotaru, C.},
  title={Gelfand pairs and strong transitivity for Euclidean buildings},
  journal={Ergodic Theory and Dynamical Systems},
  volume={35},
  issue={04},
  pages={1056--1078},
 date={2015},
 note={arXiv:1304.6210},
   doi={},
}

\bib{Ci_c}{article}{
   author={Ciobotaru, C.},
   title={A unified proof of the Howe--Moore property},
 journal={Journal of Lie Theory},
 volume={25},
 number={1},
 pages={065--089},
 date={2015},
 note={arXiv:1403.0223},
   doi={},
}

 \bib{Ci_d}{unpublished}{
   author={Ciobotaru, C.},
   title={A note on type I groups acting on $d$--regular trees},
 note={arXiv:1506.02950},
   doi={},
}

\bib{demir04}{article}{
  author ={Demir, Sel\c{c}uk},
  title ={Some finiteness results in the representation theory of isometry groups of regular trees},
  journal ={Geom. Dedicata},
  volume =	{105},
  pages ={189--207},
  date ={2004},
}

\bib{Di77}{book}{
   author={Dixmier, Jacques},
   title={$C\sp*$-algebras},
   note={Translated from the French by Francis Jellett;
   North-Holland Mathematical Library, Vol. 15},
   publisher={North-Holland Publishing Co., Amsterdam-New York-Oxford},
   date={1977},
   pages={xiii+492},
   isbn={0-7204-0762-1},
   review={\MR{0458185}},
}

\bib{FigaNebbia}{book}{
   author={Fig{\`a}-Talamanca, Alessandro},
   author={Nebbia, Claudio},
   title={Harmonic analysis and representation theory for groups acting on
   homogeneous trees},
   series={London Mathematical Society Lecture Note Series},
   volume={162},
   publisher={Cambridge University Press},
   place={Cambridge},
   date={1991},
}

\bib{Ha99}{thesis}{
  author={Hall, Rachel},
  title={Hecke $C^{*}$-algebras},
    school={Ph.D thesis, Penn. State University},
  date={1999},
}

\bib{Ti70}{article}{
   author={Tits, Jacques},
   title={Sur le groupe des automorphismes d'un arbre},
   language={French},
   conference={
      title={Essays on topology and related topics (M\'emoires d\'edi\'es
      \`a Georges de Rham)},
   },
   book={
      publisher={Springer},
      place={New York},
   },
   date={1970},
   pages={188--211},
}

\bib{vD09}{book}{
  author={van Dijk, G.},
  title={Introduction to Harmonic Analysis and Generalized Gelfand Pairs},
    publisher={Walter de Gruyter Studies in Mathematics 36},
  date={2009 },
}

\end{biblist}
\end{bibdiv}

\end{document}